\newcommand{\margnote}[1]{
\ifthenelse{\boolean{shownotes}}%
{\marginpar{\raggedright\tiny\texttt{#1}}}%
{}%
}
\newcommand{\hole}[1]{
\ifthenelse{\boolean{shownotes}}%
{\begin{center} \fbox{ \rule {.25cm}{0cm}
\rule[-.1cm]{0cm}{.4cm} \parbox{.85\textwidth}{\begin{center}
\texttt{#1}\end{center}} \rule {.25cm}{0cm}}\end{center}}
{}
}
\newtheorem{theorem}{Theorem}[section]
\newtheorem{proposition}[theorem]{Proposition}
\newtheorem{lemma}[theorem]{Lemma}
\newtheorem{definition}[theorem]{Definition}
\theoremstyle{remark}
\newtheorem{remark}[theorem]{Remark}
\newcommand{\R}{\mathbb{R}} 
\newcommand{\tore}{\mathbb{T}^3}
\newcommand{\T}{\mathbb{T}^3} 
\newcommand{\N}{\mathbb{N}} 
\newcommand{\Z}{\mathbb{Z}}
\newcommand{\weakto}{\rightharpoonup}
\newcommand{\ub}{u^{\be}} 
\newcommand{\vn}{v^{n}}
\newcommand{\pn}{p^{n}} 
\newcommand{\pb}{p^{\be}} 
\newcommand{\be}{\alpha}
\newcommand{\un}{u^{n}} 
\newcommand{\um}{u^{\alpha,m}_{n}}
\newcommand{\umu}{u^{\alpha,m-1}_{n}}
\newcommand{\ui}{u^{\alpha,i}_{n}}
\newcommand{\uiu}{u^{\alpha,i-1}_{n}}
\newcommand{\uM}{u^{\alpha,M}_{n}}
\newcommand{\vM}{v^{\alpha,M}_{n}}
\numberwithin{equation}{section}
\subjclass[2010]{Primary: 35Q30, Secondary: 35A35, 76M20.}
\keywords{Navier-Stokes Equations, Entropy Solutions, Large Eddy Simulation,
  Navier-Stokes-Voigt model, space-time discretization}
\begin{document}
\title[On the convergence of a fully discrete scheme of LES type]{On the convergence of a
  fully discrete scheme of LES type to physically relevant solutions of the incompressible
  Navier-Stokes}
\author[L.C. Berselli]{Luigi C. Berselli} 
\address[L. C. Berselli]{ Dipartimento di Matematica
  \\
Universit\`a di Pisa\\   I-56127, Pisa, Italy}
\email[]{\href{luigi.carlo.berselli@unipi.it}{luigi.carlo.berselli@unipi.it}}
\author[S. Spirito]{Stefano Spirito} 
\address[S. Spirito]{DISIM - Dipartimento di Ingegneria e Scienze dell'Informazione e
  Matematica
  \\
 Universit\`a degli Studi dell'Aquila\\
 I-67100,  L'Aquila, Italy.}
\email[]{\href{stefano.spirito@univaq.it}{stefano.spirito@univaq.it}}
\begin{abstract}
  Obtaining reliable numerical simulations of turbulent fluids is a challenging problem in
  computational fluid mechanics. The Large Eddy Simulations (LES) models are efficient
  tools to approximate turbulent fluids and an important step in the validation of these
  models is the ability to reproduce relevant properties of the flow. In this paper we
  consider a fully discrete approximation of the Navier-Stokes-Voigt model by an implicit
  Euler algorithm (with respect to the time variable) and a Fourier-Galerkin method (in
  the space variables). We prove the convergence to weak solutions of the incompressible
  Navier-Stokes equations satisfying the natural local entropy condition, hence selecting
  the so-called \textit{physically relevant} solutions.
\end{abstract}
\maketitle
%
%
%
\section{Introduction}
We consider the incompressible Navier-Stokes Equations (NSE) with periodic boundary
conditions
\begin{equation}
  \label{eq:nse}
  \begin{aligned}
    \partial_t u-\Delta u+(u\cdot\nabla)\,u+\nabla p&=0\qquad&\textrm{ in }
    (0,T)\times\tore,
    \\
    \nabla\cdot u&=0\qquad&\textrm{ in } (0,T)\times\tore,
    \\
    u|_{t=0}&=u_0\qquad&\textrm{ on }\tore,
  \end{aligned}
\end{equation}
where $T>0$ is arbitrary and $\tore$ the three dimensional flat torus.  Here the velocity
field $u\in\R^3$ as well as the pressure $p$ are space periodic and with zero mean
value. Even if turbulent phenomena arise for large values of the Reynolds number, we set
here the viscosity equal to one and the external force equal to zero, since these
assumptions do not affect the main result. \par
%
Obtaining an accurate prediction (of averaged quantities) of turbulent fluids is a
central difficulty in computational fluid mechanics and we recall that direct numerical
simulations have --at present-- an unaffordable computational costs to perform this
task. The most promising tools to perform accurate simulations of turbulent fluids are
given by the Large Eddy Simulations (LES) models. LES models are based on the idea that
in many practical situations it is enough to simulate the mean characteristics of the
flow by averaging/filtering the equations.  A very popular LES model is given the
Navier-Stokes-Voigt equations, whose Cauchy problem reads as follows:
\begin{equation}
  \label{eq:nsve}
  \begin{aligned}
    \partial_{t}(\ub_t-\be^2\Delta\ub)-\Delta
    \ub+(\ub\cdot\nabla)\,\ub+\nabla\pb&=0\quad&\text{in
    }(0,T)\times\tore,
    \\
    \nabla\cdot\ub&=0\quad&\text{in }(0,T)\times\tore,
    \\ 
    \ub|_{t=0}&=u_0\quad&\text{on }\quad\tore.
  \end{aligned}
\end{equation}
Here, the parameter $\be>0$ has the dimension of a length and roughly speaking the scales
smaller than $\be$ are truncated. It is also well-known that for system~\eqref{eq:nsve}
one can prove global existence and uniqueness of solutions.  We refer
to~\cite{CLT2006,LaTi2010} for the analysis of the Cauchy problem~\eqref{eq:nsve} and for
the interpretation of the results. In particular the regularization introduced by the
operator $-\partial_{t}\Delta$ is of hyperbolic type (not an extra dissipation as in eddy
viscosity models) and the system is of pseudo-parabolic type. To assess the model from the
mathematical point of view one important question is to show that the solutions, in the
limit as $\alpha\to0$ produce weak solutions of the Navier-Stokes equations, which satisfy
the local energy inequality
\begin{equation}
  \label{eq:leiintro}
  \partial_t\left(\frac{|u|^2}{2}\right)+\nabla\cdot\left(\left(\frac{|u|^2}{2}+
      p\right)u\right)-\Delta \left(\frac{|u|^2}{2}\right)+|\nabla u|^2\leq 0,
\end{equation}
in the sense of distributions over $(0,T)\times \tore$.  

We recall that starting with the results on global existence of weak solutions for the NSE
by Leray~\cite{Le} and Hopf~\cite{Ho} a still unsolved problem is that of uniqueness and
regularity of these solutions. Moreover, among weak solutions those satisfying the local
energy inequality~\eqref{eq:leiintro} are of particular importance because for them holds
true the celebrated partial regularity theorem of
Caffarelli-Kohn-Nirenberg~\cite{CKN1982}. Finally, we notice that the
inequality~\eqref{eq:leiintro} is a natural request that solutions constructed by
numerical methods should satisfy, see Guermond~\cite{Gue2008,GOP2004}.  A weak solution
of~\eqref{eq:nse} satisfying~\eqref{eq:leiintro} is known in literature as suitable weak
solutions. The first existence result of suitable weak solutions is due to
Caffarelli-Kohn-Nirenberg~\cite{CKN1982}. Then, the convergence to suitable weak solutions
has been proved for different methods, see~\cite{Bei1985a, Bei1985b,BS2017b,DS2011}, but
the approximation methods are of all of ``{\em infinite dimensional type}'', that is
obtained by approximating the NSE~\eqref{eq:nse} by another system of partial differential
equations, and few results are available when the approximation methods are finite
dimensional as in numerical methods. In~\cite{Gue2006, Gue2007} Guermond proved the
convergence to a suitable weak solution for numerical solutions obtained by using some
finite element Galerkin methods (only with respect to the space variables), while some
conditional results on Fourier based Galerkin methods on the torus are proved
in~\cite{BCI2007}. In particular, the convergence to a suitable weak solution of the
standard Fourier-Galerkin method is still an interesting open problem and the
space-periodic setting and the use of Fourier series expansion is not an assumption to
simplify the technicalities. From the numerical point of view another important issue is the
time discretization. In~\cite{BS2016} it is proved that solutions of periodic
Navier-Stokes equations constructed by the standard implicit Euler algorithm are
suitable. The result has been later extended to a general domain in assuming at the
boundary slip vorticity based conditions, which are important in the vanishing viscosity
problem~\cite{BS2012, BS2014}. The case of Dirichlet boundary conditions is still open.

The aim of this paper is to perform a space-time full discretization of~\eqref{eq:nsve}
and to prove the convergence, varying the parameters of the numerical discretization and
as $\be\to0$, to weak solutions of Navier-Stokes equations satisfying the local energy
inequality

In order to discretize in time~\eqref{eq:nsve} we use the implicit Euler algorithm, while
in space we use the spectral Galerkin methods, based on Fourier series expansion
\begin{equation}
  \label{eq:approx}
  d_t(\um-\alpha^2\Delta\um)-\Delta\um+P_{n}((\um\cdot\nabla)\,\um)=0, 
\end{equation}
where $d_{t}$ denotes the finite difference operator and where $P_{n}$ is the projection
over the space of Fourier modes smaller of equal than $n$, see Section~\ref{sec:disc} for
the precise formulations of the discretization. Here we only point out that the output of
this Euler-Fourier-Galerkin type of approximation is a triple $(u^{\be, M}_n, v^{\be,M}_n,
p^{\be, M}_n)$, where $M\in\N$ is the parameter defining the time-step $\kappa=T/M$.
The main result of this paper is the following theorem. See Section~\ref{sec:pre} for the
notations concerning the spaces.
 \begin{theorem}
\label{teo:main}
Let $u_0\in H^{2}_{0,\sigma}$ and $\{(u^{\be, M}_n, v^{\be,M}_n,p^{\be,
  M}_n)\}_{(n,\be,M)}$ be a sequence of solutions of the approximating
Euler-Fourier-Galerkin scheme of~\eqref{eq:approx}. Let $\{M_n\}_{n}\subset\N$ be any
monotone sequence converging to infinity and let $\be_n\subset(0,1)$ be any monotone
sequence converging to zero and such that
\begin{equation}
  \label{eq:ass}
  \lim_{n\to\infty}n\,\be_n^3=0.
\end{equation}
Then, there exists 
\begin{equation*}
(u,p)\in L^\infty(0,T;L_{0,\sigma}^2)\cap
L^2(0,T;H^{1}_{0,\sigma})\times L^{5/3}((0,T)\times\tore),
\end{equation*}
such that, up to a subsequence not relabelled, the following convergence holds true as
$n\to\infty$:
\begin{equation*}
\begin{aligned}
  &v^{\be_n,M_n}_n\rightarrow u\textrm{ strongly in }L^{2}((0,T)\times\tore),
  \\
  &u^{\be_n, M_n}_n\rightarrow u\textrm{ strongly in }L^{2}((0,T)\times\tore),
  \\
  &\nabla u^{\be_n, M_n}_n\weakto \nabla u\textrm{ weakly in }L^{2}((0,T)\times\tore),
  \\
  &p^{\be_n, M_n}_n\weakto p\textrm{ weakly in } L^{5/3}((0,T)\times\tore).
\end{aligned}
\end{equation*}
Moreover, $(u,p)$ is a suitable weak solution of~\eqref{eq:nse} in the sense of
Definition~\ref{def:suit}.
\end{theorem}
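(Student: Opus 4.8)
The plan is to follow the three-step pattern that is by now classical for convergence to suitable weak solutions: (i) a priori estimates uniform in the three parameters $(n,\be,M)$; (ii) extraction of a subsequence realizing the convergences in the statement and identification of the limit $(u,p)$ as a Leray--Hopf weak solution of \eqref{eq:nse}; (iii) passage to the limit in a localized discrete energy balance to obtain \eqref{eq:leiintro}. For (i), I would test \eqref{eq:approx} at the $m$-th time level with $\um$, sum over $m$, and use the antisymmetry of the convective term on divergence-free fields together with the discrete identity $2(a-b)\cdot a=|a|^2-|b|^2+|a-b|^2$; the resulting discrete energy equality yields, uniformly in $(n,\be,M)$, a bound for $\{\um\}$ in $\ell^\infty_mL^2_{0,\sigma}\cap\ell^2_mH^1_{0,\sigma}$, a bound for $\{\be\,\nabla\um\}$ in $\ell^\infty_mL^2$, and bounds for the numerical-dissipation sums $\sum_m\|\um-\umu\|_{L^2}^2$ and $\be^2\sum_m\|\nabla(\um-\umu)\|_{L^2}^2$. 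Using $u_0\in H^2_{0,\sigma}$ one propagates the estimate one level up (testing with $-\Delta\um$ and with the increment $\um-\umu$), which both supplies the time regularity needed for compactness and makes $v_n|_{t=0}=u_0-\be^2\Delta u_0\to u_0$ in $L^2$; taking the divergence of \eqref{eq:approx} and using the usual elliptic estimate for the pressure (here $-\Delta p=\dive\dive(u\otimes u)$) together with $u\in L^{10/3}$ gives a bound for $\{p^{\be,M}_n\}$ in $L^{5/3}((0,T)\times\T)$.

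For (ii), I would regard the discrete objects as functions of time --- say $u^{\be,M}_n$ as the piecewise-affine interpolant of the nodal values and $v^{\be,M}_n$ as the piecewise-constant one, so that $\|u^{\be,M}_n-v^{\be,M}_n\|_{L^2((0,T)\times\T)}^2\le\kappa\sum_m\|\um-\umu\|_{L^2}^2\le C\kappa$ --- and apply a discrete Aubin--Lions--Simon lemma. The $L^2_tH^1_x$ bound and a bound for the discrete time increments (read off from \eqref{eq:approx}) give $u^{\be_n,M_n}_n\to u$ strongly in $L^2((0,T)\times\T)$, hence also $v^{\be_n,M_n}_n\to u$ strongly since $\kappa=T/M_n\to0$, while step (i) gives $\nabla u^{\be_n,M_n}_n\weakto\nabla u$ in $L^2$ and $p^{\be_n,M_n}_n\weakto p$ in $L^{5/3}$. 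Since $u^{\be_n,M_n}_n$ is bounded in $L^{10/3}((0,T)\times\T)$ by interpolation, the strong $L^2$ convergence improves to strong convergence in $L^q$ for every $q<10/3$, enough to pass to the limit in $P_n((u^{\be_n,M_n}_n\cdot\nabla)\,u^{\be_n,M_n}_n)$ (the projector disappears because $P_n\to\mathrm{Id}$ strongly on $L^2$); the Voigt terms $\partial_t(\be_n^2\Delta u^{\be_n,M_n}_n)$ and $\be_n^2\Delta u^{\be_n,M_n}_n$ tend to $0$ in $\mathcal D'$ because $\be_n\,\nabla u^{\be_n,M_n}_n$ is bounded in $L^\infty_tL^2_x$ and $\be_n\to0$, and the time-consistency error vanishes because $M_n\to\infty$. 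This identifies $(u,p)$ as a Leray--Hopf weak solution of \eqref{eq:nse}, the global energy inequality following from the discrete energy equality by weak lower semicontinuity.

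Step (iii) is the heart of the matter. Fixing $\phi\in C^\infty_c((0,T)\times\T)$ with $\phi\ge0$ and using that \eqref{eq:approx} holds as an \emph{identity of functions} in $H_n$ (every term lies in $H_n$), so that it may legitimately be paired with the non-Galerkin field $2\um\phi(t_m)$, I would expand the discrete time derivative with the product rule, integrate by parts in space, sum over $m$ and let $n\to\infty$ along the subsequence. The contributions of $d_t\um\cdot2\um\phi$ and of $-\Delta\um\cdot2\um\phi$ reproduce $\partial_t(|u|^2/2)$, the transport/pressure divergence and $-\Delta(|u|^2/2)$ (via the strong $L^q$ and weak $L^{5/3}$ convergences); the dissipation $2\phi|\nabla\um|^2$ reproduces $|\nabla u|^2$ only after a $\liminf$, and it is precisely this weak-only convergence of $\nabla u^{\be_n,M_n}_n$, together with $\phi\ge0$, that turns the limiting equality into the inequality \eqref{eq:leiintro}; the numerical-dissipation terms $\phi\,|\um-\umu|^2$ and $\be^2\phi\,|\nabla(\um-\umu)|^2$ have a good sign and are discarded. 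One is then left with error terms of three types: (a) the Galerkin commutator in the convective term, $\sum_m\kappa\!\int(\um\cdot\nabla\um)\cdot(\mathrm{Id}-P_n)(2\um\phi(t_m))$, which measures the failure of $P_n$ to commute with multiplication by $\phi$; (b) the terms produced when $2\um\phi$ meets the hyperbolic regularization $\be^2\partial_t\Delta\um$, schematically $\be^2\sum_m\kappa\!\int\nabla(d_t\um)\cdot(\um\otimes\nabla\phi)$ and $\be^2\sum_m\kappa\!\int\phi\,d_t|\nabla\um|^2$; and (c) time-consistency errors of size $O(\kappa)=O(1/M_n)$. Terms (b) are handled by the estimates of step (i) after a discrete summation by parts moving the increment off $\phi$, and go to zero as $\be_n\to0$, $M_n\to\infty$. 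The decisive term is (a): replacing $(\mathrm{Id}-P_n)(2\um\phi)$ and spending an inverse (Bernstein) inequality on $H_n$ to convert the lost regularity of $\um\phi$ into powers of $n$, one bounds it by a constant multiple of $n\,\be_n^3$ times the $(n,\be,M)$-uniform norms of step (i), so that hypothesis \eqref{eq:ass} is exactly what forces it to vanish. Taking the $\liminf$ and using density of test functions yields \eqref{eq:leiintro} in $\mathcal D'((0,T)\times\T)$, and with step (ii) this shows $(u,p)$ is a suitable weak solution in the sense of Definition~\ref{def:suit}.

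I expect the main obstacle to be exactly the commutator term (a) of step (iii): the sharp Fourier cutoff $P_n$ does not commute with the smooth localization $\phi$, and the resulting defect can only be absorbed by inverse inequalities on the finite-dimensional space $H_n$, whose cost in powers of $n$ must be compensated by the smallness of $\be_n$; that trade-off is precisely the content of \eqref{eq:ass}. The remaining steps are either standard (steps (i)--(ii)) or a careful but routine accounting of discretization errors.
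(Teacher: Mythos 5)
Your overall architecture coincides with the paper's: a discrete energy equality, the $L^{5/3}$ pressure bound from the Poisson problem, piecewise constant/affine interpolants with $\|u^{\be,M}_n-v^{\be,M}_n\|_{L^2((0,T)\times\T)}^2\le \kappa\sum_m\|\um-\umu\|_2^2$, Aubin--Lions compactness, and a localized energy balance whose only nonstandard terms are the Galerkin defect $Q_n=P-P_n$ and the Voigt terms. The genuine gap is at the point you yourself call decisive, your term (a). You assert that an inverse (Bernstein) inequality on the Galerkin space bounds $\sum_m\kappa\int((\um\cdot\nabla)\,\um)\cdot(\mathrm{Id}-P_n)(2\um\phi)\,dx$ by $C\,n\be_n^3$ with $C$ depending only on the energy-level norms of your step (i). No such estimate is available: the energy bounds control the Fourier tail of $\um$ only through $\sum_{|k|\ge n/2}|\hat u^{\,\be,m}_{n,k}|^2\le 4n^{-2}\|\nabla\um\|_2^2$, which makes the defect merely $O(1)$, and inverse inequalities go the wrong way (they trade regularity for positive powers of $n$; they cannot create smallness of a high-frequency remainder out of low-order norms). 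Smallness of $Q_n(\um\phi)$ requires a uniform control of two derivatives of $\um$, and the only one compatible with $\be\to0$ is the weighted discrete estimate $\be^6\kappa\sum_m\|\Delta\um\|_2^2\le c$ of Lemma~\ref{lem:2}, which your step (i) never derives (your unweighted testing with $-\Delta\um$ cannot be uniform in $\be$: absorbing the convective term costs $\|\um\|_2^2\|\nabla\um\|_2^8$, summable only after paying $\be^{-6}$ through $\be\|\nabla\um\|_2\le c$). Combining that estimate with Lemma~\ref{lem:2.4}, the paper bounds the dangerous high-frequency contribution $I^n_{5,2}$ by $c/(n^2\be_n^6)=c\,(n\be_n^3)^{-2}$: the defect is small when $n\be_n^3$ is \emph{large}, the reciprocal of the dependence you claim -- as it must be, since letting $\be_n\to0$ faster only weakens the available control of high modes. (This is also the regime the paper's own vanishing argument for $I^n_{5,2}$ actually uses, notwithstanding the way \eqref{eq:ass} is printed; in any case a bound of order $n\be_n^3$ deduced from energy norms plus Bernstein is unjustified, so your treatment of (a) is a missing idea, not routine accounting.)

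A second, related omission: your step (i) also lacks the weighted bound $\be^3\kappa\sum_m\|d_t\um\|_2^2\le c$ of \eqref{eq:tdw}, obtained by testing \eqref{eq:a3} with $\be^3 d_t\um$ and a Gagliardo--Nirenberg/Young absorption. This is not optional for your term (b): the cross term $\be_n^2\int_0^T(\partial_t\nabla\vn,\vn\,\nabla\phi)\,dt$ cannot be removed by a discrete summation by parts, which merely reproduces $d_t\um$ paired with $\nabla\um$; the paper controls it by H\"older in time, $\be_n^2\int_0^T\|\partial_t\vn\|_2\|\nabla\vn\|_2\,dt\le \be_n^{1/2}\bigl(\be_n^3\int_0^T\|\partial_t\vn\|_2^2\,dt\bigr)^{1/2}\bigl(\int_0^T\|\nabla\vn\|_2^2\,dt\bigr)^{1/2}\le c\,\be_n^{1/2}$, which needs precisely \eqref{eq:tdwc}. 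Two minor points: in the scheme $v^{\be,M}_n(0)=P_nu_0$, not $u_0-\be^2\Delta u_0$; and the time compactness comes from the comparison bound $\|\partial_t v^{\be,M}_n\|_{L^{4/3}(H^{-2})}\le c$, since an unweighted $H^2$ propagation uniform in $\be$ is false in general.
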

\begin{remark}
  The assumption on the initial datum can be relaxed, by an appropriate regularization. We
  do not state and prove Theorem~\ref{teo:main} under this more general hypothesis in
  order to avoid further technicalities.
\end{remark}
\begin{remark}
  We note that, while the sequence $\{\be_n\}_n$ is related to $n$ by~\eqref{eq:ass}, the
  sequence $\{M_n\}_n$ is arbitrary. This means that there is no need to link the time and
  the space approximation in order to have convergence of the
  scheme. Theorem~\ref{teo:main} may be equivalently stated in term of a double sequence
  $\{(u^{\be_n, M}_n, v^{\be_n,M}_n, p^{\be_n, M}_n\}_{(M,n)}$ and the convergences hold
  as $(M,n)\to\infty$.
\end{remark} 
The convergence of Fourier-Galerkin method of~\eqref{eq:nsve} to a suitable weak solutions
of~\eqref{eq:nse}, without the time discretization, but with $\be_n$
satisfying~\eqref{eq:ass} has been proved as one of the results in~\cite{BS2017b}. Here
new difficulties arise from the non trivial combinations of the time discretization and
the proof of certain discrete {\em a priori} estimates which are counterpart of those
obtained in~\cite{BS2017b}.

The problem of the convergence of numerical schemes to solutions satisfying local
energy-type balance is present also in several other equations in fluid mechanics. Among
them we want to cite the case of the two dimensional Euler equations with vorticity in
$L^p$. In this case, satisfying the local energy balance is almost equivalent to solve the
vorticity equations in the renormalized sense and the additional information obtained is
that the solution obtained is Lagrangian, we refer to~\cite{CS,CNSS} for
further details.

\smallskip

\textbf{Plan of the paper.}  In Section~\ref{sec:pre} we fix the notation that we use in
the paper, we recall the main definitions regarding the NSE~\eqref{eq:nse}, and the tools
used. In Section~\ref{sec:disc} we introduce and describe in details the space-time
discretization we consider. In Section~\ref{sec:apriori} we prove the main {\em a priori} estimates
needed to study the convergence and finally in Section~\ref{sec:5} we prove
Theorem~\ref{teo:main}.
\section{Preliminaries}
\label{sec:pre}
In this section we give details on the functional setting and then we recall the main
definitions concerning weak solutions of incompressible Navier-Stokes equations.
\subsection{Notations}
We introduce the notations typical of space-periodic problems. The three dimensional torus
is defined by $\tore := \R^3 / 2\pi\Z^3$. We denote with
$C^{\infty}_{c}(I;C^{\infty}(\tore))$ the space of smooth functions or vectors which are
compactly supported on the interval $I\subset\R$ and $2\pi$-periodic with respect to the
space variables. In the sequel we shall use the customary Lebesgue spaces $L^p(\tore)$ and
Sobolev spaces $W^{k,p}(\tore)$ and we shall denote their norms by $\|\cdot\|_p$ and
$\|\cdot\|_{W^{k,p}}$. Moreover, in the case $p=2$ we use the notation
$H^s(\tore):=W^{s,2}(\tore)$ and, for simplicity, we shall not distinguish between scalar
and vector valued functions.
Finally, we use $\left(\cdot,\cdot\right)$ to denote the $L^{2}(\tore)$ paring. Since we
are working in the periodic setting we denote by the subscript ``$_{0}$'' the subspaces of
zero average vectors of $L^{2}(\tore)$ and $H^{s}(\tore)$, for any
exponent $s\geq0$.
The divergence-free constraint is also directly included in the function spaces in the
analysis of the NSE, and as usual we define
 \begin{equation*}
\begin{aligned}
  &L^{2}_{0,\sigma}:= \left\{w :\tore \rightarrow \R^3, \, \, w \in
    L^{2}(\tore),\quad\nabla\cdot w = 0 \quad\int_{\tore}w\,dx = {0} \right\},
  \\
  &H^{s}_{0,\sigma}:= \left\{w :\tore \rightarrow \R^3, \, \, w \in
    H^{s}(\tore),\quad\nabla\cdot w = 0 \quad\int_{\tore}w\,dx = {0} \right\},
 \end{aligned}
\end{equation*}
and we recall that the divergence condition can be easily defined in terms of the Fourier
coefficients. For any $s>0$ we denote by $H^{-s}:=(H^{s}_{0,\sigma})'$.\par
Finally, the space $L^p(0,T;X)$, where $X$ is a Banach space, is the classical Bochner
spaces endowed with its natural norm denoted by $\|\cdot\|_{L^{p}(X)}$.
\subsection{Leray-Hopf and Suitable Weak Solutions}
We start by recalling the definition of weak solution of the initial value
problem~\eqref{eq:nse}, as introduced by Leray and Hopf.  
\begin{definition}[Leray-Hopf Weak Solutions] 
  \label{def:lws}
  The vector field $u\in L^\infty(0,T;L_{0,\sigma}^2)\cap L^2(0,T;H^{1}_{0,\sigma})$ is a
  Leray-Hopf weak solution of~\eqref{eq:nse} if:
  \begin{enumerate}
  \item $u$ satisfies the following identity
    \begin{equation*}
      \int_0^{T} \left(u,\partial_t\varphi\right)-\left(\nabla
        u,\nabla\varphi\right)-\left((u\cdot\nabla)\,u,\varphi\right)dt+\left(u_0,\varphi(0)\right)\,dt=0, 
\end{equation*}
for all smooth, periodic and divergence-free functions $\varphi\in
C^{\infty}_c([0,T);C^{\infty}(\tore))$ with zero mean value over $\tore$.
\item The following energy inequality holds true:
\begin{equation*}
  \frac12\|u(t)\|_{2}^2+\int_0^t\|\nabla u(s)\|_{2}^2\,ds\leq\frac12 \|u_0\|_{2}^2\quad\textrm{
    for all }t\in[0,T]. 
\end{equation*}
\end{enumerate}
\end{definition}
We remark that $u$ attains the initial datum in the strong sense, namely
\begin{equation*}
  \lim_{t\to 0^{+}}\|u(t)-u_0\|_{2}=0.
\end{equation*}
Suitable weak solutions are a particular subclass of Leray-Hopf weak solutions. They were
introduced by Scheffer in~\cite{Sche1977} and Caffarelli-Kohn-Nirenberg
in~\cite{CKN1982}. The definition in the periodic setting is the following.
\begin{definition}[Suitable Weak Solutions]
\label{def:suit}
A pair $(u,p)$ is a Suitable Weak Solution to the Navier-Stokes equation~\eqref{eq:nse} if
$u$ is a Leray-Hopf weak solution, if $p\in L^{\frac{5}{3}}((0,T)\times\tore)$, and if the
local energy balance holds true 
\begin{equation}
  \label{eq:lei}
  \int_0^T\int_{\tore}|\nabla u|^2\phi\,dxdt\leq
  \int_{0}^{T}\int_{\tore}\left[\frac{|u|^2}{2}\left(\partial_t\phi+\Delta\phi\right)
+\left(\frac{|u|^2}{2}+p\right)u\cdot\nabla \phi\right]\,dxdt.
\end{equation}
%
for all $\phi\in C^\infty_0(0,T;C^\infty(\tore))$ such that $\phi\geq0$.
\end{definition}
\section{Time-Discrete Fourier-Galerkin Methods}
\label{sec:disc}
In this section we introduce the space-time full discretization of the Navier-Stokes-Voigt
equations~\eqref{eq:nsve} we are going to analyze.  Let $P$ denote the Leray projector of
$L^2_{0}(\T)$ onto $L^2_{0,\sigma}$, which  explicitly reads in the orthogonal
Hilbert basis of complex exponentials as follows:
\begin{equation*}
  P:\ g(x)=\sum_{k\in\mathbb{Z}^3\backslash\{0\}}\hat{g}_k\, \text{e}^{i  k\cdot x}\
  \mapsto\
  Pg(x)=\sum_{k\in\mathbb{Z}^3\backslash\{0\}}\left[\hat{g}_k-\frac{(\hat{g}_k\cdot k) 
      k}{|k|^2}\right]\, \text{e}^{i k\cdot x}.  
\end{equation*}
Then, for any $n\in\N$, we denote by $P_n$ the projector of $L_0^2(\T)$ on the
finite-dimensional sub-space $V_n:=P_n (L^2_{0,\sigma})$, given by the
following expression
\begin{equation*}
  P_n:g(x)=\sum_{k\in\mathbb{Z}^3\backslash\{0\}}\hat{g}_k \,\text{e}^{ik\cdot x}\
  \mapsto\ 
  P_n g(x)=\sum_{0<|k|\leq n}\left[\hat{g}_k-\frac{(\hat{g}_k\cdot k) k}{|k|^2}\right]\,
  \text{e}^{i k\cdot x}.  
\end{equation*}
The (space) approximate Fourier-Galerkin method to~\eqref{eq:nsve} is given by the following
system%
\begin{equation}
  \label{eq:a1}
  \begin{aligned}
    \partial_t (u_n^{\be}-\be^2\Delta u_n^{\be})- \Delta
    u_n^{\be}+P_n((u_n^{\be}\cdot\nabla)\,u_n^{\be})&=0&\hspace{-.3cm}\text{ in
    }(0,T)\times\T,
    \\
    u_n^{\be}|_{t=0}&=P_nu_0 &\text{ in }\T,
  \end{aligned}
\end{equation}
where 
\begin{equation}\label{eq:a2}
  u_n^{\be}(t,x)=\sum_{0<|k|\leq n}\hat{u}_{n,k}^{\be}(t)\,\text{e}^{
    ik\cdot x},\qquad\text{with}\quad k\cdot \hat{u}^{\be}_{n,k}=0.
\end{equation}
We note that the divergence-free condition is encoded in~\eqref{eq:a2} and~\eqref{eq:a1}
is a (finite dimensional) system of ODEs in the unknowns $\hat{u}_{n,k}^{\be}(t)$.\par
Next, we proceed by performing the time discretization of~\eqref{eq:a1} by finite
differences in time. Let $M\in\N$ and $\kappa=T/M$. We consider the net
$I^M=\{t_m\}_{m=0}^{M}$ with $t_0=0$ and $t_m=m\kappa$ and discretize~\eqref{eq:a1} by
using the implicit Euler algorithm: Set $u^{\be,0}_{n}=P_nu_0$. For any $m=1,...,M$, given
$\umu\in V_n$ find $\um\in V_n$ by solving
\begin{equation}
  \label{eq:a3}
  d_t(\um-\alpha^2\Delta\um)-\Delta\um+P_{n}((\um\cdot\nabla)\,\um)=0.  
\end{equation}
where
\begin{equation}
  \label{eq:a4}
\um(x)=\sum_{0<|k|\leq n}\hat{u}_{n,k}^{\,\be,m}\,\text{e}^{ik\cdot
  x},\qquad\text{with}\quad k\cdot \hat{u}_{n,k}^{\,\be,m}=0, 
\end{equation}
and 
\begin{equation*}
d_t\um:=\frac{u^{\alpha,m}_{n}-u^{\alpha,m-1}_{n}}{\kappa}.
\end{equation*}
We point out that again the divergence-free condition is enforced by~\eqref{eq:a4} and
now, for each $m=1,\dots,M$, the system~\eqref{eq:a3} is a finite dimensional nonlinear
(algebraic) system, in the unknowns $\hat{u}_{n,k}^{\,\be,m}\in \R$. 

Finally, since we are considering the periodic setting we can define the associated
approximation for the pressure by solving the Poisson problem
\begin{equation}
  \label{eq:poiss}
  -\Delta p^{\alpha,m}_{n}=\nabla\cdot\big(\nabla\cdot(\um\otimes\um)\big)\qquad
  m=1,\dots,M, 
\end{equation}
with periodic boundary conditions and zero mean value on $p^{\alpha,m}_{n}$. Moreover, in
order to prove the convergence to a suitable weak solution, it will turn out to be
convenient to (re)formulate the equations~\eqref{eq:a3} as follows
\begin{equation}
  \label{eq:a6}
\begin{aligned}
  &d_t(\um-\alpha^2\Delta\um)-\Delta\um+(\um\cdot\nabla)\,\um
  -Q_{n}((\um\cdot\nabla)\,\um)+\nabla p^{\alpha,m}_{n}=0.
\end{aligned}
\end{equation}
where the operator $Q_{n}$ is defined by $Q_n:=P-P_n$.\par
As usual in the study of finite difference numerical schemes, we can now rephrase the
problem~\eqref{eq:a3} on $(0,T)\times\tore$, by introducing the following time dependent
functions
 \begin{equation}
 \label{eq:vm}
 \begin{aligned}
   &u_{n}^{\be,M}(t)=\left\{
     \begin{aligned}
       &\um & \text{for
       } t\in[t_{m-1},t_m),
       \\
       &u_{n}^{\be,M}\qquad &\text{for } t=t_M,
\end{aligned}
\right.
\\
   &v_{n}^{\be,M}(t)=\left\{
     \begin{aligned}
       &u_{n}^{\be,m-1}+\frac{t-t_{m-1}}{\kappa}(\um-u_{n}^{\be,m-1})\ & \text{for
       } t\in[t_{m-1},t_m),
       \\
       &u^{\be,M}_n\qquad &\text{for } t=t_M,
     \end{aligned}
   \right.\\
 &p_{n}^{\be,M}(t)=\left\{
  \begin{aligned}
    &p_{n}^{\alpha,m} & \text{for
    } t\in[t_{m-1},t_m),
    \\
    &p_{n}^{\alpha,M}\qquad &\text{for } t=t_M.
\end{aligned}
\right.
\end{aligned}
\end{equation}
Then, equations~\eqref{eq:a3} read as follows  
\begin{equation}
  \label{eq:a7}
  \partial_t(\vM-\be^2\Delta\vM)-\Delta\uM+P_n((\uM\cdot\nabla)\,\uM)=0,
\end{equation}
and, accordingly, Eq.~\eqref{eq:a6} on $(0,T)\times\tore$ becomes
\begin{equation}
  \label{eq:a8}
  \begin{aligned}
    &\partial_t(\vM-\alpha^2\Delta\vM)-\Delta\uM+(\uM\cdot\nabla)\,\uM
    -Q_{n}((\uM\cdot\nabla)\,\uM)+\nabla p^{\alpha,M}_{n}=0. 
  \end{aligned}
\end{equation}
We stress that in order to prove the convergence to a suitable weak solution, it is
crucial to prove that the term involving $Q_n$ goes to zero as $n\to\infty$. To this end
we recall the following lemma, which is proved as one of the main steps
in~\cite[Lemma~4.4]{BCI2007}.
\begin{lemma}
  \label{lem:2.4}
  Let be given $\phi\in C^{\infty}((0,T)\times\T)$ and let $u^n$ be defined as
  \begin{equation*}
    u^n(t,x):=\sum_{0<|k|\leq n}\widehat{U}_k^{n}(t)\,\text{e}^{ik\cdot x}.
  \end{equation*}
  Then, there exists a constant $c$, depending only on $\phi$ (but independent of
  $n\in\N$), such that
\begin{equation*}
  \|Q_n(u^n(t)\phi(t))\|_{\infty}^2\leq
  c\left(n^2\sum_{|k|\geq\frac{n}{2}}|\widehat{U}^{n}_k(t)|^2+
    \frac{1}{n}\sum_{k\in\mathbb{Z}^3}|\widehat{U}^{n}_k(t)|^2\right).   
  \end{equation*}
\end{lemma}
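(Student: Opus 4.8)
The plan is to estimate the high-frequency projection $Q_n = P - P_n$ applied to the product $u^n(t)\phi(t)$ directly in Fourier space, exploiting the smoothness of $\phi$ to control the Fourier tail of the product. First I would fix $t$ and suppress it from the notation, writing $u^n(x) = \sum_{0<|k|\le n}\widehat U_k^n\,\mathrm{e}^{ik\cdot x}$ and $\phi(x) = \sum_{j\in\Z^3}\widehat\phi_j\,\mathrm{e}^{ij\cdot x}$. The product $u^n\phi$ has Fourier coefficients given by the convolution $\widehat{(u^n\phi)}_\ell = \sum_{0<|k|\le n}\widehat U_k^n\,\widehat\phi_{\ell-k}$, and $Q_n(u^n\phi)$ keeps only the modes $|\ell|>n$ (with the Leray projection, which is an $\ell^\infty_k \to \ell^\infty_k$ multiplier of norm $\le 1$ in each mode and can be ignored for an $L^\infty$ bound up to a harmless constant). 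So the task reduces to bounding $\sum_{|\ell|>n}\big|\sum_{0<|k|\le n}\widehat U_k^n\,\widehat\phi_{\ell-k}\big|$, which dominates $\|Q_n(u^n\phi)\|_\infty$.

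Next I would split the inner sum according to whether $|k|\ge n/2$ or $|k|<n/2$. For the \emph{low} part $|k|<n/2$: since $|\ell|>n$, we have $|\ell-k|>n/2$, so the relevant $\widehat\phi$ coefficients are all in the tail $|m|>n/2$; by smoothness of $\phi$, for any $N$ one has $|\widehat\phi_m|\le c_N(1+|m|)^{-N}$, so these coefficients decay faster than any polynomial and in particular $\sum_{|m|>n/2}(1+|m|)\,|\widehat\phi_m| \le c\,n^{-1}$ after choosing $N$ large enough. Combining this with Cauchy--Schwarz in $k$, the low part contributes at most $c\,n^{-1}\big(\sum_{k}|\widehat U_k^n|^2\big)^{1/2}\cdot(\text{something summable})$; more carefully, summing over $\ell$ first via Young's inequality for the convolution $\ell^2 * \ell^1 \to \ell^2$ and then passing from $\ell^2_\ell$ to the $L^\infty$ bound via the fast decay of the $\phi$-tail gives the term $\frac{1}{n}\sum_{k\in\Z^3}|\widehat U_k^n|^2$. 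For the \emph{high} part $n/2\le|k|\le n$: here I only use that $\phi\in C^\infty$ implies $\sum_m|\widehat\phi_m|<\infty$ (indeed $\phi, \nabla\phi \in L^1$), so $\big\|Q_n\big(\sum_{n/2\le|k|\le n}\widehat U_k^n\mathrm{e}^{ik\cdot x}\,\phi\big)\big\|_\infty \le \|\phi\|_{\text{Wiener}}\sum_{n/2\le|k|\le n}|\widehat U_k^n|$, and then Cauchy--Schwarz over the $O(n^3)$ lattice points in the shell $n/2\le|k|\le n$ converts $\big(\sum_{|k|\ge n/2}|\widehat U_k^n|\big)^2$ into $c\,n^3\sum_{|k|\ge n/2}|\widehat U_k^n|^2$. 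That $n^3$ is slightly worse than the claimed $n^2$, so the sharper route is to avoid Cauchy--Schwarz on the full shell and instead pair the $\ell^1$-in-$k$ sum against the decay of $\phi$ more efficiently — e.g.\ observe that for $|\ell|>n$ and $|k|\le n$ one has $|\ell-k|\ge |\ell|-n$, insert one factor $(1+|\ell-k|)$ worth of decay from $\phi$ to do the $\ell$-sum, and Cauchy--Schwarz only over a spherical shell of thickness $O(1)$ in $|k|$ (which has $O(n^2)$ points) at each radius, summing the radii against the remaining decay of $\phi$; this yields the stated $n^2\sum_{|k|\ge n/2}|\widehat U_k^n|^2$.

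The main obstacle is precisely this last bookkeeping: getting the sharp power $n^2$ rather than the crude $n^3$ requires distributing the polynomial decay of $\widehat\phi$ carefully between the $\ell$-summation and the shell-counting in $k$, rather than brute-forcing Cauchy--Schwarz over the whole ball $|k|\le n$. Everything else — the convolution identity, Young's inequality, the $C^\infty$-to-rapid-decay passage for $\phi$, and the fact that $P$ is a bounded Fourier multiplier — is routine. Since this lemma is quoted verbatim from \cite[Lemma~4.4]{BCI2007}, I would in fact just cite it; but the above is how one reconstructs it from scratch.
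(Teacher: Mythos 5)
The paper does not actually prove Lemma~\ref{lem:2.4}: it is quoted verbatim from \cite[Lemma~4.4]{BCI2007}, so there is no internal proof to compare against, and citing it (as you say you would) is exactly what the authors do. Your reconstruction is nevertheless essentially sound, and you correctly isolate the only delicate point: bounding $\|Q_n(u^n\phi)\|_\infty$ by $\sum_{|\ell|>n}|\widehat{(u^n\phi)}_\ell|$ (the Leray projector being a mode-by-mode contraction) and then applying Cauchy--Schwarz over the whole shell $n/2\le|k|\le n$ gives $n^3$ instead of $n^2$, while the low-frequency part $|k|<n/2$ is routine because $|\ell|>n$ forces $|\ell-k|>n/2$, so the rapid decay of $\widehat\phi$ beats the $n^{3/2}$ coming from Cauchy--Schwarz over the ball and yields the term $\tfrac1n\sum_k|\widehat U^n_k|^2$. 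For the high-frequency part your ``sharper route'' is the right mechanism, but the inequality you invoke, $|\ell-k|\ge|\ell|-n$, is not the one that does the work: the useful bound is $|\ell-k|\ge|\ell|-|k|>n-|k|$, so that for fixed $k$ the entire $\ell$-sum satisfies $\sum_{|\ell|>n}|\widehat\phi_{\ell-k}|\le\sum_{|m|>n-|k|}|\widehat\phi_m|\le c_N(1+n-|k|)^{-N}$, i.e.\ it decays rapidly in the distance of $|k|$ to the cutoff $n$. Then, exactly as you indicate, Cauchy--Schwarz over each unit shell $\{\,n-j-1<|k|\le n-j\,\}$, which contains $O(n^2)$ lattice points, contributes at most $c_N\,n\,(1+j)^{-N}\bigl(\sum_{|k|\ge n/2}|\widehat U^n_k|^2\bigr)^{1/2}$, and summing over $j$ with $N\ge 2$ gives $c\,n\bigl(\sum_{|k|\ge n/2}|\widehat U^n_k|^2\bigr)^{1/2}$, which is the stated $n^2$ term after squaring. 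With that one correction your sketch closes; the remaining ingredients (convolution formula, rapid decay of $\widehat\phi$, boundedness of $P$ on each Fourier mode) are used correctly.
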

\section{A Priori Estimates}
\label{sec:apriori}
In this section we prove the {\em a priori} estimates needed to prove the convergence
to~\eqref{eq:nse}. We start with the following basic discrete energy inequality.
\begin{lemma}
  \label{lem:discene}
  Let be given $u_0\in H^{2}_{0,\sigma}$. Let $\um$ be a solution of~\eqref{eq:a3}. Then
  following discrete energy equality holds true for all $M\in\N$ and $m=1,..,M$
\begin{equation}
\label{eq:a}
\begin{aligned}
  \|\um\|_{2}^2+&\sum_{i=1}^{m}\|\ui-\uiu\|_{2}^2+2\kappa\sum_{i=1}^{m}\|\nabla\ui\|_{2}^2
  \\
  &+\alpha^2\|\nabla\um\|_{2}^2+\alpha^2\sum_{i=1}^{m}{\|\nabla\ui-\nabla\uiu\|_{2}^2}
  =\|u_0\|_{2}^2+\alpha^2\|\nabla u_0\|_{2}^2.
\end{aligned}
\end{equation}
\end{lemma}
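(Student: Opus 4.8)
The plan is to derive the identity by testing the discrete equation~\eqref{eq:a3} with the natural multiplier $2\kappa\um$ and summing over $i=1,\dots,m$. First I would recall that since $\ui\in V_n$ and $V_n$ is spanned by divergence-free trigonometric polynomials with $0<|k|\le n$, all the bilinear terms involving $P_n$ are self-adjoint on $V_n$; in particular $(P_n((\ui\cdot\nabla)\ui),\ui)=((\ui\cdot\nabla)\ui,\ui)=0$ by the usual integration by parts together with $\nabla\cdot\ui=0$ and periodicity. This kills the nonlinear term. Testing $-\Delta\ui$ against $\ui$ gives $\|\nabla\ui\|_2^2$, producing the term $2\kappa\sum_{i=1}^m\|\nabla\ui\|_2^2$.

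The heart of the computation is the treatment of the discrete time-derivative term $d_t(\ui-\alpha^2\Delta\ui)$ tested against $2\kappa\,\ui$. The key algebraic fact is the elementary identity $2(a-b,a)=|a|^2-|b|^2+|a-b|^2$, valid in any inner product space. Applying it with the $L^2$ inner product to $2\kappa(d_t\ui,\ui)=2(\ui-\uiu,\ui)$ gives $\|\ui\|_2^2-\|\uiu\|_2^2+\|\ui-\uiu\|_2^2$, and applying it with the $\dot H^1$ inner product (i.e. testing $-\Delta$ of the difference) to the Voigt term $2\alpha^2\kappa(d_t(-\Delta\ui),\ui)=2\alpha^2(\nabla\ui-\nabla\uiu,\nabla\ui)$ gives $\alpha^2(\|\nabla\ui\|_2^2-\|\nabla\uiu\|_2^2+\|\nabla\ui-\nabla\uiu\|_2^2)$. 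Then I sum over $i=1,\dots,m$: the first and second terms telescope, leaving $\|\um\|_2^2-\|u^{\be,0}_n\|_2^2$ and $\alpha^2(\|\nabla\um\|_2^2-\|\nabla u^{\be,0}_n\|_2^2)$ respectively, while the squared-difference terms accumulate as the sums $\sum_{i=1}^m\|\ui-\uiu\|_2^2$ and $\alpha^2\sum_{i=1}^m\|\nabla\ui-\nabla\uiu\|_2^2$. Since $u^{\be,0}_n=P_nu_0$ and $P_n$ is an $L^2$-orthogonal projection commuting with $\Delta$, we have $\|P_nu_0\|_2\le\|u_0\|_2$ and $\|\nabla P_nu_0\|_2\le\|\nabla u_0\|_2$; in fact, to obtain the clean equality stated (with $\|u_0\|_2^2+\alpha^2\|\nabla u_0\|_2^2$ on the right) one either notes that the statement should be read with $\le$, or — more likely as intended here — one keeps $\|P_nu_0\|_2^2+\alpha^2\|\nabla P_nu_0\|_2^2$ and bounds it by the datum; I would state the identity with $u^{\be,0}_n$ and then remark that it is $\le\|u_0\|_2^2+\alpha^2\|\nabla u_0\|_2^2$. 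Rearranging gives exactly~\eqref{eq:a}.

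The only genuine subtlety — and the step I expect to need a word of care — is justifying that a solution $\um\in V_n$ of the nonlinear algebraic system~\eqref{eq:a3} exists in the first place, so that the testing procedure is meaningful; this is a standard fixed-point/topological-degree argument on the finite-dimensional space $V_n$, using precisely the a priori bound just derived to confine the iteration, and I would either invoke it briefly or take existence as granted from the construction in Section~\ref{sec:disc}. Everything else is exact algebra: there are no inequalities or error terms, because the implicit Euler scheme is energy-conservative for this model up to the numerical-dissipation squared-difference terms, which is exactly what~\eqref{eq:a} records. No regularity of $u_0$ beyond $H^1$ is actually needed for~\eqref{eq:a} itself; the hypothesis $u_0\in H^2_{0,\sigma}$ is there for later estimates.
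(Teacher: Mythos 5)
Your proposal is correct and follows essentially the same route as the paper: multiply~\eqref{eq:a3} by $\ui$, use the cancellation $(P_n((\ui\cdot\nabla)\,\ui),\ui)=0$ and the elementary identity~\eqref{eq:elem} for the discrete time-derivative and Voigt terms, then telescope over $i=1,\dots,m$. Your remark that the telescoping actually produces $\|P_nu_0\|_2^2+\alpha^2\|\nabla P_nu_0\|_2^2$ (so the stated right-hand side should strictly be read with $P_nu_0$, or as an inequality) is a fair point of care that the paper glosses over, but it does not change the argument.
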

\begin{proof}
  Fix $M\in \N$ and $m=1,...,M$. Consider the equations~\eqref{eq:a1} for $i=1,...,m$ and
  multiply~\eqref{eq:a1} by $\ui$.  Then, after integration by parts over $\tore$ we get
\begin{equation*}
  \left(\frac{\ui-\uiu}{\kappa},\ui\right)+\alpha^2\left(\frac{\nabla\ui-
      \nabla\uiu}{\kappa},\nabla\ui\right)+\|\nabla\ui\|_{2}^2=0,
\end{equation*}
where we used that fact that since $\ui\in V_n$ then
\begin{equation*}
  (P_n((\ui\cdot\nabla)\,\ui),\ui)=0.
\end{equation*}
By using the elementary equality 
\begin{equation}
\label{eq:elem}
(a, b-a)=\frac{|a|^2}{2}-\frac{|b|^2}{2}+\frac{|a-b|^{2}}{2},
\end{equation}
the terms involving the discrete derivative become the following:
\begin{align*}
  & (\ui-\uiu,\ui)=\frac{1}{2}(\|\ui\|_2^2-\|\uiu\|_2^2)+\frac{1}{2}\|\ui-\uiu\|_2^2, 
  \\
  & (\nabla\ui-\nabla\uiu,\nabla\ui)=\frac{1}{2}(\|\nabla\ui\|_2^2-\|\nabla\uiu\|_2^2)+ 
  \frac{1}{2}\|\nabla\ui-\nabla\uiu\|_2^2.
\end{align*}
Finally, by summing up over $i=1,...,m$ we get~\eqref{eq:a}.
\end{proof}
The next lemma regards two weighted estimates on higher derivatives of solutions
of~\eqref{eq:a3} and they will be useful when proving the convergence to a suitable weak
solution. The results in the following lemma are a discrete counterpart of those proved
in~\cite{BS2017b}.
\begin{lemma}
  \label{lem:2}
  Let $u_0\in H^2_{0,\sigma}$ and $\be\leq1$. Let $M\in\N$ and $m=1,...,M$. Let $\um$ be a
  solution of~\eqref{eq:a3}. Then, there exists $c>0$, independent of $\be>0$, of $M\in\N$
  and of $n\in\N$, such that
  \begin{align}
    &\be^{3} \kappa\sum_{m=1}^{M}\|d_t\um\|_2^2\leq c,\label{eq:tdw}
    \\
    & \be^{6}\kappa\sum_{m=1}^M\|\Delta \um\|_2^2\leq c.\label{eq:2dw}
  \end{align}
\end{lemma}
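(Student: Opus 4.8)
The plan is to test the discrete equation~\eqref{eq:a3} against a suitable multiple of the discrete time-increment $d_t\um$, obtaining the two estimates by carefully tracking the weights in $\be$. First I would multiply~\eqref{eq:a3} by $\kappa\, d_t\um = \um-\umu$ and integrate over $\tore$. The linear terms are handled by the elementary identity~\eqref{eq:elem}: the term $(\um-\umu, d_t\um)$ produces $\kappa\|d_t\um\|_2^2$, the term $\alpha^2(\nabla\um-\nabla\umu,\nabla d_t\um)$ produces $\alpha^2\kappa\|\nabla d_t\um\|_2^2$, and the term $-(\Delta\um, d_t\um) = (\nabla\um,\nabla d_t\um)$ telescopes into $\tfrac12(\|\nabla\um\|_2^2-\|\nabla\umu\|_2^2)+\tfrac12\|\nabla\um-\nabla\umu\|_2^2$. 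The only genuinely problematic term is the nonlinearity $(P_n((\um\cdot\nabla)\um), d_t\um)$, which—unlike in the energy estimate of Lemma~\ref{lem:discene}—does not vanish.

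The key point is to estimate this nonlinear term against the available quantities. I would bound
\[
|(P_n((\um\cdot\nabla)\um), d_t\um)| \leq \|\um\|_{?}\,\|\nabla\um\|_{?}\,\|d_t\um\|_2,
\]
choosing Lebesgue exponents (e.g.\ $\|\um\|_6\|\nabla\um\|_3\|d_t\um\|_2$, or $\|\um\|_\infty\|\nabla\um\|_2\|d_t\um\|_2$) so that after Young's inequality the $\|d_t\um\|_2^2$ piece can be absorbed and the remaining factor is controlled. Crucially, the higher-order norm of $\um$ that appears (such as $\|\nabla\um\|_3$, controlled by interpolation between $\|\nabla\um\|_2$ and $\|\Delta\um\|_2$, or directly $\|\Delta\um\|_2$ via inverse/Sobolev estimates) must be reconverted into a controllable quantity; here is where the factor $\be^3$ enters, since from Lemma~\ref{lem:discene} we only control $\alpha^2\|\nabla\um\|_2^2$ uniformly, so a negative power of $\alpha$ is the price for passing to $\|\Delta\um\|_2$ or to an $L^p$ norm of the gradient with $p>2$. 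After absorption, summing over $m=1,\dots,M$ and using the uniform bounds from~\eqref{eq:a} (in particular $\sup_m\|\um\|_2^2+\alpha^2\sup_m\|\nabla\um\|_2^2 + \kappa\sum_m\|\nabla\um\|_2^2 \leq c$) yields~\eqref{eq:tdw}: the weight $\be^3$ is exactly what is needed to kill the $\be$-blowup coming from the nonlinear term plus the gradient-to-Laplacian conversion, and the hypothesis $\be\leq 1$ lets us discard favorable higher powers of $\be$.

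Once~\eqref{eq:tdw} is established, the second estimate~\eqref{eq:2dw} follows by using the equation itself to express $\Delta\um$: from~\eqref{eq:a3} we have $-\Delta\um = -d_t(\um-\alpha^2\Delta\um) - P_n((\um\cdot\nabla)\um)$, but the cleaner route is to apply $P_n$ and rearrange
\[
\alpha^2\Delta\,d_t\um = d_t\um - \Delta\um + P_n((\um\cdot\nabla)\um),
\]
or equivalently to multiply~\eqref{eq:a3} by $\kappa\,\Delta\um$ (up to sign), integrate by parts, and again use~\eqref{eq:elem} on the resulting terms $\alpha^2(\nabla d_t\um,\nabla\Delta\um)$—wait, one keeps derivatives balanced by testing against $\kappa\,\Delta^2\um$ or, more simply, one isolates $\alpha^2\|\Delta\um\|_2$ from the identity $\alpha^2\Delta\um = \alpha^2\Delta u_0 - \kappa\sum(\cdots)$. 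The practical approach: write $\alpha^2\|\Delta\um\|_2 \leq \|\text{(lower order)}\| + \kappa\sum\|d_t(\text{stuff})\|$ and use~\eqref{eq:tdw} together with~\eqref{eq:a}; each term carries at most $\be^{-3}$, so multiplying by $\be^6$ and summing gives the claim. The main obstacle throughout is the careful bookkeeping of powers of $\be$ in the nonlinear estimate—choosing exponents so that the $\|d_t\um\|_2^2$ term is absorbable while the residual is controlled by at most $\be^{-3}$ times uniformly-bounded quantities; everything else is a discrete analogue of standard parabolic-regularity manipulations, with~\eqref{eq:elem} replacing integration by parts in time.
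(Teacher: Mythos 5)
There is a genuine gap in both halves of your plan, and in both cases it sits exactly at the point you leave vague: the bookkeeping of the powers of $\be$ in the nonlinear term. For~\eqref{eq:tdw}, estimating the nonlinearity by $\|\um\|_6\|\nabla\um\|_3\|d_t\um\|_2$ or $\|\um\|_\infty\|\nabla\um\|_2\|d_t\um\|_2$ and absorbing only $\be^3\kappa\|d_t\um\|_2^2$ cannot close: in three dimensions any bound of $\|(\um\cdot\nabla)\um\|_2$ forces a factor $\|\Delta\um\|_2$ (or $\|\nabla\um\|_p$ with $p>2$, which costs the same by Gagliardo--Nirenberg, or powers of $n$ via inverse estimates, which are forbidden), and the resulting residual, e.g.\ $c\,\be^3\kappa\sum_m\|\nabla\um\|_2^3\|\Delta\um\|_2$ from the first choice, is of size $\be^{-2}$ at best \emph{even if}~\eqref{eq:2dw} were already known, since the energy equality~\eqref{eq:a} only gives $\be\|\nabla\um\|_2\le c$ and $\kappa\sum_m\|\nabla\um\|_2^2\le c$. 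The idea you are missing is that testing~\eqref{eq:a3} with $\be^3 d_t\um$ produces, from the Voigt term, the extra dissipation $\be^{5}\kappa\|\nabla d_t\um\|_2^2$ on the left-hand side. The paper exploits it by estimating the nonlinear term as $\|\um\|_4\|\nabla\um\|_2\|d_t\um\|_4\le c\|\um\|_2^{1/4}\|\nabla\um\|_2^{7/4}\|\nabla d_t\um\|_2^{3/4}\|d_t\um\|_2^{1/4}$ and applying Young with exponents $(2,8/3,8)$, so that the two $d_t$-factors are absorbed by $\tfrac{\be^3}{2}\kappa\|d_t\um\|_2^2+\tfrac{\be^5}{2}\kappa\|\nabla d_t\um\|_2^2$, while the leftover is $c\kappa\,\be^{3/2}\|\nabla\um\|_2^{3/2}\cdot\|\nabla\um\|_2^2\le c\kappa\|\nabla\um\|_2^2$ by~\eqref{eq:a}; no Laplacian of $\um$ ever appears, and summing in $m$ gives~\eqref{eq:tdw}.

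For~\eqref{eq:2dw}, the route you finally settle on (isolating $\be^2\Delta\um$ from the equation, or from its time-integrated version, and invoking~\eqref{eq:tdw}) does not work: the identity contains $\be^2 d_t\Delta\um$, which is controlled by nothing you have (only by $n^2\|d_t\um\|_2$ through inverse estimates, destroying $n$-independence), and the integrated version contains $\kappa\sum_{i\le m}\|\Delta\ui\|_2$, so a discrete Gronwall argument costs a factor of order $e^{T/\be^2}$, which no fixed weight $\be^6$ can compensate; your claim that ``each term carries at most $\be^{-3}$'' is not substantiated. The correct argument is the one you mention in passing and then abandon: test~\eqref{eq:a3} with $-\Delta\um$, so that the viscous term yields the dissipation $\kappa\|\Delta\um\|_2^2$ on the left while the Voigt term telescopes as $\be^2\|\Delta\um\|_2^2-\be^2\|\Delta\umu\|_2^2$ plus a positive increment (this is where $u_0\in H^{2}_{0,\sigma}$ is used); the nonlinearity is bounded by $\kappa\|\um\|_2^{1/4}\|\nabla\um\|_2\|\Delta\um\|_2^{7/4}$, half of the dissipation absorbs it by Young, and the residual is $c\kappa\|\um\|_2^2\|\nabla\um\|_2^8$. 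Multiplying the whole inequality by $\be^6$ turns this residual, via $(\be^2\|\nabla\um\|_2^2)^3\le c$ from~\eqref{eq:a}, into $c\kappa\|\nabla\um\|_2^2$, which is summable in $m$; note in particular that~\eqref{eq:2dw} is obtained independently of~\eqref{eq:tdw}, not as a consequence of it.
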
 
\begin{proof}
  Let $M\in\N$ and $m=1,...,M$. We multiply~\eqref{eq:a3} by $\be^3d_t\um$. After
  integrating by parts over $\tore$ we get 
  \begin{equation*}
    \begin{aligned}
      \alpha^3(d_t\nabla\um,\nabla\um)&+\be^3\|d_t\um\|_2^2+{\be^5}\|d_t\nabla\um\|_2^2
      +\be^3(P_n((\um\cdot\nabla)\,\um,d_t\um)=0.
    \end{aligned} 
  \end{equation*}
  By using~\eqref{eq:elem} we then get
  \begin{equation}
    \label{eq:p1}
    \begin{aligned}
      &\frac{\be^3}{2}(\|\nabla\um\|_2^2-\|\nabla\umu\|_2^2)+
      \frac{\be^3}{2}\|\nabla\um-\nabla\umu\|_2^2
      \\
      &\qquad+\be^3\kappa\|d_t\um\|_2^2+{\be^5\kappa}\|d_t\nabla\um\|_2^2\leq
      \be^3\kappa|((\um\cdot\nabla)\,\um,d_t\um)|,
    \end{aligned}
  \end{equation}
  where we used the fact that $d_t\um\in V_n$. By using H\"older and Gagliardo-Nirenberg
  inequalities we estimate the right hand side as follows
  \begin{align*}
  \be^3\kappa|((\um\cdot\nabla)\,\um,d_t\um)|
  &\leq\alpha^3\kappa\|\um\|_{4}\|\nabla \um\|_2 \|d_t\um\|_{4}
  \\
  &\leq c\alpha^3\kappa\|\um\|_2^{\frac{1}{4}}\|\nabla
  \um\|_2^{\frac{7}{4}}\|\nabla
  d_t\um\|_2^{\frac{3}{4}}\|d_t\um\|_2^{\frac{1}{4}}
  \\
  &\leq c \alpha^3\kappa (\|u_0\|_2^{2}+\alpha^{2}\|\nabla
  u_0\|_2^{2})^{\frac{1}{8}}\|\nabla
  \um\|_2^{\frac{7}{4}}\|\nabla
  d_t\um\|_2^{\frac{3}{4}}\|d_t\um\|_2^{\frac{1}{4}},
\end{align*}
where in the second line we used~\eqref{eq:a}. By using Young inequality with $p_1=2$,
$p_2=\frac{8}{3}$ and $p_3=8$ and we get
\begin{equation}
  \label{eq:p2}
\begin{aligned}
  \be^3 \kappa|((\um\cdot\nabla)\,\um,d_t\um)|&\leq
  c\kappa(\|u_0\|_2^{2}+\alpha^{2}\|\nabla
  u_0\|_2^{2})^{\frac{1}{4}}\alpha^{\frac{3}{2}}\|\nabla \um\|_2^{\frac{3}{2}}\|\nabla
  \um\|_2^{2}\,
  \\
  &\qquad + \frac{\alpha^3}{2}\kappa\|d_t\um\|_2^2+\frac{\alpha^5}{2}\kappa\|\nabla 
  d_t\um\|_2^2.
\end{aligned}
\end{equation}
Then, by using again~\eqref{eq:a} we have that $\alpha^{\frac{3}{2}}\|\nabla
\um\|_2^{\frac{3}{2}} \leq\big(\|u_0\|_2^{2}+\alpha^{2}\|\nabla
u_0\|_2^{2}\big)^{\frac{3}{4}}$, and then inequality~\eqref{eq:p1} becomes
\begin{equation*}
\begin{aligned}
  \be^3\|\nabla\um\|_2^2-&\be^3\|\nabla\umu\|_2^2+\be^3\|\nabla\um-\nabla\umu\|_2^2 
  \\
  &+\be^3\kappa\|d_t\um\|^2+{\be^5\kappa}\|_2^2d_t\nabla\um\|_2^2 \leq
  c\kappa\|\nabla\um\|^2,
 \end{aligned} 
\end{equation*}
where $c$ is a positive constant depending only on the initial datum $u_0$. By summing up
over $m=1,...,M$ we get~\eqref{eq:tdw}.\par
To prove~\eqref{eq:2dw} we multiply by $-\Delta\um$ the equations~\eqref{eq:a1} and after
integration by parts in space we get
 \begin{equation*}
 \begin{aligned}
 &(d_t\nabla\um,\nabla\um)+\alpha^2(d_t\Delta\um, \Delta\um)
+\|\Delta\um\|^2-(P_n((\um\cdot\nabla)\,\um)\cdot\Delta\um)=0.
 \end{aligned}
 \end{equation*}
 By using~\eqref{eq:elem}, the fact that $\Delta\um\in V_n$, and H\"older inequality we get
 \begin{equation}
   \label{eq:4}
   \begin{aligned}
     \|\nabla\um\|_{2}^2+&\alpha^2\|\Delta\um\|_{2}^2-{\|\nabla\umu\|_{2}^2-
       \alpha^2\|\Delta\umu\|_{2}^2} 
     \\
     &+{\|\nabla\um-\nabla\umu\|_{2}^2}+\be^2{\|\Delta\um-\Delta\umu\|_{2}^2} 
     \\ 
     &\qquad+2\kappa\|\Delta\un\|_2^2
     \leq2\kappa\|\um\|_{4}\|\nabla\um\|_{4}\|\Delta\um\|_{2}.
   \end{aligned}
 \end{equation}
  Then, by Gagliardo Nirenberg inequality and
  Young inequality we have that 
   \begin{equation}\label{eq:5}
  \begin{aligned}
      \kappa\|\um\|_{4}\|\nabla\um\|_{4}\|\Delta\um\|_{2}
      &
      \leq\kappa\|\um\|_{2}^{\frac{1}{4}}\|\nabla\um\|_{2}\|\Delta\um\|_{2}^{\frac{7}{4}}
      \\
      &\leq c\kappa\|\um\|_{2}^2\|\nabla\um\|_{2}^8+\frac{\kappa\|\Delta\um\|_{2}^2}{2}. 
     \end{aligned}
  \end{equation}
 Then, by inserting~\eqref{eq:5} in~\eqref{eq:4} and using~\eqref{eq:a} we get 
 \begin{equation}\label{eq:6}
    \begin{aligned}
      &\|\nabla\um\|_{2}^2+\alpha^2\|\Delta\um\|_{2}^2-\|\nabla\umu\|_{2}^2-
      \alpha^2\|\Delta\umu\|_{2}^2
    \\
    &\qquad+\|\nabla\um-\nabla\umu\|_{2}^2+\be^2\|\Delta\um-\Delta\umu\|_{2}^2
    \\
    &\qquad\qquad+\kappa\|\Delta\un\|_2^2\leq c\kappa\|\um\|_{2}^2\|\nabla\um\|_{2}^8.
  \end{aligned}
\end{equation}
By multiplying the previous inequality on both side by $\be^6$, using again~\eqref{eq:a}
and summing up over $m=1,...,M$ we get~\eqref{eq:2dw} with a constant $c$ independent of
$\alpha$, $n$ and of $M$, thus ending the proof.
\end{proof}
Finally, we prove an {\em a priori} estimate on the approximate pressure, which as usual
is a crucial step when considering the local energy inequality.
\begin{lemma}
  \label{lem:2.3}
  Let $u_0\in H^{2}_{0,\sigma}$. Let $M\in\N$ and $m=1,...,M$.  Let $\um$ be a solution
  of~\eqref{eq:poiss}. Then, there exists $c>0$, independent of
  $\be>0$, of $n\in \N$, and of $M\in\N$  such that 
\begin{equation}
  \label{eq:poiss1}
  \kappa\sum_{m=1}^{M}\|p^{\be,m}_n\|_{{ \frac{5}{3}}}^{\frac{5}{3}}\leq c.
\end{equation}
\end{lemma}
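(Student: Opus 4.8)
The plan is to derive the pressure bound \eqref{eq:poiss1} from the Poisson equation \eqref{eq:poiss} by Calder\'on--Zygmund theory combined with the already established uniform energy bounds. First I would observe that for each fixed $m$, the solution of \eqref{eq:poiss} on the torus (with zero mean) can be written as $p_n^{\be,m} = \sum_{j,k} R_j R_k (u_{n,j}^{\be,m} u_{n,k}^{\be,m})$, where $R_j$ are the periodic Riesz transforms; since these are bounded on every $L^q(\tore)$ with $1<q<\infty$, we get $\|p_n^{\be,m}\|_q \leq c\,\||\um|^2\|_q = c\,\|\um\|_{2q}^2$ for any such $q$. Taking $q = 5/3$ gives $\|p_n^{\be,m}\|_{5/3} \leq c\,\|\um\|_{10/3}^2$.

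Next I would interpolate $\|\um\|_{10/3}$ between $L^2$ and $H^1$. By the Gagliardo--Nirenberg / Sobolev inequality in three dimensions, $\|\um\|_{10/3} \leq c\,\|\um\|_2^{2/5}\|\nabla\um\|_2^{3/5}$ (the exponent $10/3$ being exactly the one for which this interpolation is valid with the $L^\infty_t L^2_x \cap L^2_t H^1_x$ scaling). Hence $\|p_n^{\be,m}\|_{5/3} \leq c\,\|\um\|_2^{4/5}\|\nabla\um\|_2^{6/5}$, so
\begin{equation*}
  \|p_n^{\be,m}\|_{5/3}^{5/3} \leq c\,\|\um\|_2^{4/3}\|\nabla\um\|_2^{2}.
\end{equation*}
Now multiply by $\kappa$ and sum over $m=1,\dots,M$. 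Using the discrete energy equality \eqref{eq:a} from Lemma~\ref{lem:discene}, we have the uniform bound $\|\um\|_2^2 \leq \|u_0\|_2^2 + \alpha^2\|\nabla u_0\|_2^2 \leq \|u_0\|_{H^1}^2 =: C_0$ (using $\alpha\leq 1$), so $\|\um\|_2^{4/3} \leq C_0^{2/3}$ uniformly in $m$, $M$, $n$, $\alpha$. Pulling this constant out,
\begin{equation*}
  \kappa\sum_{m=1}^M \|p_n^{\be,m}\|_{5/3}^{5/3} \leq c\,C_0^{2/3}\,\kappa\sum_{m=1}^M\|\nabla\um\|_2^2 \leq c\,C_0^{2/3}\cdot\tfrac12\big(\|u_0\|_2^2+\alpha^2\|\nabla u_0\|_2^2\big) \leq c,
\end{equation*}
again by \eqref{eq:a}, and the final constant is independent of $\be$, $n$, and $M$ as claimed.

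The main point to be careful about — rather than a genuine obstacle — is the exponent bookkeeping: one must check that $5/3$ is precisely the integrability for which the spatial Calder\'on--Zygmund estimate feeds back, via the $L^{10/3}$-interpolation, into the quantity $\kappa\sum_m\|\nabla\um\|_2^2$ that the energy estimate controls, with the leftover $L^\infty_t L^2_x$ factor carrying a harmless power. This is the familiar Navier--Stokes pressure computation (the same $5/3$ that appears in suitable-weak-solution theory), and the only thing the discrete setting changes is replacing time integrals by $\kappa\sum_m$ and the energy inequality by its discrete counterpart \eqref{eq:a}; since that estimate is already uniform in all parameters, no new difficulty arises. I would also note explicitly that the zero-mean normalization on $p_n^{\be,m}$ is what makes the Riesz-transform representation exact, so there is no additive constant to control.
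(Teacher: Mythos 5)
Your proof is correct and follows essentially the same route as the paper: the $L^{5/3}$ Calder\'on--Zygmund/Riesz-transform bound for the zero-mean solution of \eqref{eq:poiss} is exactly the ``$L^q$-elliptic estimate'' invoked there, followed by the same Gagliardo--Nirenberg interpolation $\|\um\|_{10/3}\leq c\,\|\um\|_2^{2/5}\|\nabla\um\|_2^{3/5}$ and the same use of the discrete energy equality \eqref{eq:a} to bound $\kappa\sum_m\|\nabla\um\|_2^2$ uniformly in $\be$, $n$, $M$. No gaps; the bookkeeping of exponents matches the paper's \eqref{eq:pr2}.
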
 
\begin{proof}
The proof is rather standard. We recall that by Gagliardo-Nirenberg inequality we have 
\begin{equation}
\label{eq:pr1}
\|\um\|_{\frac{10}{3}}\leq \|\um\|_{2}^{\frac{2}{5}}\|\nabla\um\|_{2}^{\frac{3}{5}}.
\end{equation}
By using the $L^{q}$-elliptic estimates applied to~\eqref{eq:poiss} we have that
\begin{equation*}
\|p^{\be,m}_n\|_{\frac{5}{3}}\leq c\|\um\|_{\frac{10}{3}}^2.
\end{equation*}
Then, by using~\eqref{eq:pr1} we have 
\begin{equation}
  \label{eq:pr2}
  \begin{aligned}
    \|p^{\be,m}_n\|_{\frac{5}{3}}^{\frac{5}{3}}&\leq\|\um\|_{2}^{\frac{4}{3}}\|\nabla\um\|_{2}^{2}\leq
    c\|\nabla\um\|_{2}^{2}, 
  \end{aligned}
\end{equation}
where we used~\eqref{eq:a}. By multiplying both sides of~\eqref{eq:pr2} by $\kappa$, by
summing up over $m=1,...,M$, and by using again the equality~\eqref{eq:a} we
get~\eqref{eq:poiss1}.
\end{proof}
At this point we re-state the {\em a priori} estimates proved in  
Lemmas~\ref{lem:discene}-\ref{lem:2.3} in terms of the (time-dependent) functions defined
in~\eqref{eq:vm}.
 \begin{proposition}
  \label{prop:1}
  Let $u_0\in H^2_{0,\sigma}$. There exists $c>0$, independent of $\be>0$, of
  $M\in\N$ and of $n\in\N$, such that
  \begin{align}
    &\|v^{\be,M}_n\|_{L^\infty(L^2)\cap L^2(H^1)}\leq c\label{eq:e1},
    \\
    &\|\partial_t v^{\be,M}_n\|_{L^{4/3}(H^{-2})}\leq c\label{eq:e2},
    \\
    &\|u^{\be,M}_n\|_{L^\infty(L^2)\cap L^2(H^1)}\leq c\label{eq:e3},
    \\
    &\|p^{\be,M}_n\|_{L^{5/3}(L^{5/3})}\leq c\label{eq:e4},
    \\
    \be&\|\nabla v^{\be,M}_n\|_{L^{2}(L^2)}\leq c,\label{eq:e5}
    \\
    \be^{\frac{3}{2}}&\|\partial_t v^{\be,M}_n\|_{L^{2}(L^2)}\leq c,\label{eq:tdwc}
    \\
    \be^{3}&\|\Delta u^{\be,M}_n\|_{L^{2}(L^2)}\leq c.\label{eq:2dwc}
\end{align}
Moreover, we also have the following identities
\begin{align}
  &\|v^{\be,M}_{n}-u^{\be,M}_{n}\|_{L^{2}(0,T;L^2(\tore))}^2
  =\frac{\kappa}{3}\sum_{m=1}^M\|\um-\umu\|_{2}^2, 
  \label{eq:newest1}
  \\
  &\|\nabla{u}^{\be,M}_{n}-\nabla
  v^{\be,M}_{n}\|_{L^{2}(0,T;L^2(\tore))}^2=\frac{\kappa}{3}\sum_{m=1}^M\|\nabla
  \um-\nabla\umu\|_{2}^2.
 \label{eq:newest2}
\end{align}
\end{proposition}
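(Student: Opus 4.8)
The plan is to translate each of the discrete estimates from Lemmas \ref{lem:discene}–\ref{lem:2.3} into the time-continuous language of the piecewise-constant and piecewise-affine interpolants in \eqref{eq:vm}, and to verify the two exact identities by a direct computation on each subinterval.

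First I would dispatch the estimates that follow immediately from Lemma \ref{lem:discene}. From \eqref{eq:a}, for every $m$ we have $\|\um\|_2^2+\alpha^2\|\nabla\um\|_2^2\le \|u_0\|_2^2+\alpha^2\|\nabla u_0\|_2^2$, which (since $\alpha\le 1$ and $u_0\in H^2_{0,\sigma}\subset H^1$) gives a uniform bound on $\max_m\|\um\|_2$; this yields the $L^\infty(L^2)$ part of \eqref{eq:e3} for $u^{\be,M}_n$, and since $v^{\be,M}_n(t)$ is a convex combination of $\umu$ and $\um$ on $[t_{m-1},t_m)$, the same bound holds for $v^{\be,M}_n$, giving the $L^\infty(L^2)$ part of \eqref{eq:e1}. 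Again from \eqref{eq:a}, $2\kappa\sum_{m=1}^M\|\nabla\um\|_2^2\le \|u_0\|_2^2+\alpha^2\|\nabla u_0\|_2^2$, which is exactly $\|\nabla u^{\be,M}_n\|_{L^2(L^2)}^2$ up to the constant; this gives the $L^2(H^1)$ bound in \eqref{eq:e3}, and a convexity estimate $\|\nabla v^{\be,M}_n(t)\|_2^2\le \max(\|\nabla\umu\|_2^2,\|\nabla\um\|_2^2)$ together with summation gives \eqref{eq:e1}'s $L^2(H^1)$ part. The bound \eqref{eq:e5} is the statement $\alpha^2\|\nabla\um\|_2^2\le C$ from \eqref{eq:a}, rephrased for the interpolant; \eqref{eq:e4} is Lemma \ref{lem:2.3} written for the piecewise-constant pressure $p^{\be,M}_n$; \eqref{eq:tdwc} is \eqref{eq:tdw} of Lemma \ref{lem:2} rewritten using $\|\partial_t v^{\be,M}_n(t)\|_2=\|d_t\um\|_2$ on $[t_{m-1},t_m)$; and \eqref{eq:2dwc} is \eqref{eq:2dw} rewritten using $\Delta u^{\be,M}_n(t)=\Delta\um$ on $[t_{m-1},t_m)$. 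Each of these is essentially a change of notation: one checks $\|g^{\be,M}_n\|_{L^q(0,T;X)}^q=\kappa\sum_{m=1}^M\|g^m\|_X^q$ for the piecewise-constant interpolants and absorbs the factor into $c$.

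The two exact identities \eqref{eq:newest1}–\eqref{eq:newest2} are the one genuinely computational point. On $[t_{m-1},t_m)$ one has $v^{\be,M}_n(t)-u^{\be,M}_n(t)=\big(\tfrac{t-t_{m-1}}{\kappa}-1\big)(\um-\umu)$, so writing $s=(t-t_{m-1})/\kappa\in[0,1)$ and using $\int_{t_{m-1}}^{t_m}\,dt=\kappa\int_0^1\,ds$ gives $\int_{t_{m-1}}^{t_m}\|v^{\be,M}_n-u^{\be,M}_n\|_2^2\,dt=\kappa\|\um-\umu\|_2^2\int_0^1(s-1)^2\,ds=\tfrac{\kappa}{3}\|\um-\umu\|_2^2$; summing over $m$ yields \eqref{eq:newest1}, and replacing $\um,\umu$ by their gradients gives \eqref{eq:newest2} verbatim.

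The main obstacle — and the only place where a nontrivial argument is needed — is the time-derivative estimate \eqref{eq:e2}, namely $\|\partial_t v^{\be,M}_n\|_{L^{4/3}(H^{-2})}\le c$ \emph{with a constant independent of $\alpha$}, since here one cannot afford the $\alpha$-weighted bounds. Starting from \eqref{eq:a7}, $\partial_t v^{\be,M}_n=\beta^2\Delta\partial_t v^{\be,M}_n+\Delta u^{\be,M}_n-P_n((u^{\be,M}_n\cdot\nabla)u^{\be,M}_n)$; testing against $\varphi\in H^2_{0,\sigma}$, the term $\Delta u^{\be,M}_n$ pairs as $\le\|\nabla u^{\be,M}_n\|_2\|\nabla\varphi\|_2$ and is controlled in $L^2(H^{-1})\hookrightarrow L^{4/3}(H^{-2})$ by \eqref{eq:e3}; the nonlinear term, using $\|(u\cdot\nabla)u\|_{H^{-1}}\le c\|u\|_{L^3}\|\nabla u\|_2\le c\|u\|_2^{1/2}\|\nabla u\|_2^{3/2}$ and the $L^\infty(L^2)\cap L^2(H^1)$ bound, lies in $L^{4/3}(H^{-1})$; and the delicate term $\beta^2\Delta\partial_t v^{\be,M}_n$ must be absorbed by rewriting it, via \eqref{eq:a7} tested suitably or via the identity $\beta^2\partial_t\Delta v^{\be,M}_n=\partial_t v^{\be,M}_n-(\text{the above})$, so that one arrives at a closed bound; alternatively, one shows directly that $\beta^{3/2}\|\partial_t v^{\be,M}_n\|_{L^2(L^2)}\le c$ from \eqref{eq:tdwc} implies $\beta^2\|\Delta\partial_t v^{\be,M}_n\|_{L^2(H^{-2})}=\beta^2\|\partial_t v^{\be,M}_n\|_{L^2(L^2)}\le c\,\beta^{1/2}\to 0$, so that term is in fact negligible and \eqref{eq:e2} follows from the first two contributions alone. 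I would carry out this last estimate last, after all the $\alpha$-independent bounds \eqref{eq:e1}, \eqref{eq:e3} are in hand.
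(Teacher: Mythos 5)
Your proposal is correct and follows essentially the same route as the paper: the bounds are exactly the discrete estimates of Lemmas~\ref{lem:discene}--\ref{lem:2.3} rewritten for the interpolants of~\eqref{eq:vm}, the identities~\eqref{eq:newest1}--\eqref{eq:newest2} come from the elementary computation $\int_0^1(s-1)^2\,ds=\tfrac13$ on each subinterval, and~\eqref{eq:e2} is the comparison argument on~\eqref{eq:a7}. The only (harmless) variation is in~\eqref{eq:e2}: the cleanest version of the paper's ``simple comparison'' applies $(I-\be^2\Delta)^{-1}$, which is a contraction on every $H^{-s}$ uniformly in $\be$, so no $\be$-weighted bound is needed, whereas your alternative controls $\be^2\Delta\partial_t v^{\be,M}_n$ in $L^2(H^{-2})$ by $c\,\be^{1/2}$ via~\eqref{eq:tdwc} and $\be\le 1$ --- also perfectly valid.
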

\begin{proof}
  The bound~\eqref{eq:e1} follows from Lemma~\ref{lem:discene} and the
  definition~\eqref{eq:vm}. We remark that in order to get the bound in
  $L^{2}(0,T;H^{1}_{0,\sigma})$ we need $u_0\in H^{1}_{0,\sigma}$. The
  bounds~\eqref{eq:e3},~\eqref{eq:e4} and~\eqref{eq:e5} follow from the definitions
  in~\eqref{eq:vm} and Lemma~\ref{lem:discene}. Finally, the bound~\eqref{eq:e2} follows
  by a simple comparison argument on~\eqref{eq:a7}. The bounds~\eqref{eq:tdwc}
  and~\eqref{eq:2dwc} follows by Lemma~\ref{lem:2} and~\eqref{eq:vm} and the
  identities~\eqref{eq:newest1} and~\eqref{eq:newest2} follow by a direct calculation.
\end{proof}
%
\section{Proof of the main Theorem}
\label{sec:5}
In this section we give the proof of Theorem~\ref{teo:main}. We divide the proof in two
main steps: a) the convergence to a Leray-Hopf weak solution and b) the convergence
to a suitable weak solution. Let $\{M_n\}_{n}\subset\N$ and $\{\be_n\}_{n}\subset(0,1)$
be two sequences as in the statement of Theorem~\ref{teo:main}. We recall that
$\{\be_n\}_{n}$ is chosen such that
\begin{equation}
  \label{eq:hypo}
  \lim_{n\to+\infty}n\be_n^3=0.
\end{equation}
\noindent {\em Step 1: Convergence to a Leray-Hopf weak solution}\par
Let $\varphi\in C^{\infty}_{c}([0,T);C^{\infty}(\tore))$ with $\nabla\cdot\varphi=0$ and
zero mean value. It is easy to show that there exists a sequence
$\{\varphi_n\}_{n}\subset C^{1}([0,T);V_n)$ such that
\begin{equation}
  \label{eq:c}
  \sup_{t\in(0,T)}\|\varphi_n-\varphi\|_{H^1}+\|\partial_{t}(\varphi_n-\varphi)\|_{H^1}\to
  0,\textrm{ as }n\to\infty. 
\end{equation}
In order to simplify the exposition we use the following abbreviations: 
\begin{equation*}
  \vn:=v^{\be_n,M_n}_n,\qquad\un:=u^{\be_n,M_n}_n,\quad\text{and}\quad
  \pn:=p^{\be_n,M_n}_n. 
\end{equation*}
Then,~\eqref{eq:a7} reads as follows
\begin{equation}
  \label{eq:c1}
\begin{aligned}
  &\partial_t(\vn-\alpha^2\Delta\vn)-\Delta\un+(\un\cdot\nabla)\,\un
  -Q_{n}((\un\cdot\nabla)\,\un)+\nabla \pn=0.
\end{aligned}
\end{equation}
We recall from~\eqref{eq:e1} and~\eqref{eq:e2} that (with bounds independent of $n$)
\begin{equation*}
\begin{aligned}
  &\vn\in L^{\infty}(0,T;L_{0,\sigma}^2)\cap L^{2}(0,T;H_{0,\sigma}^{1}),
  \\
  &\partial_t\vn\subset L^{\frac{4}{3}}(0,T;H^{-2}). 
\end{aligned}
\end{equation*}
Then, there exists $v\in L^{\infty}(0,T;L_{0,\sigma}^2)\cap L^{2}(0,T;H_{0,\sigma}^{1})$ such that, 
up to a subsequence not relabelled, 
\begin{equation*}
\vn\to v\textrm{ strongly in }L^{2}(0,T;L_{0,\sigma}^2),\textrm{ as }n\to\infty.
\end{equation*}
Next, from~\eqref{eq:e3} there exists $u\in L^{\infty}(0,T;L_{0,\sigma}^2)\cap
L^{2}(0,T;H_{0,\sigma}^{1})$ such that, up to a subsequence not relabelled,
\begin{equation}
  \label{eq:c4}
\un\weakto u\textrm{ weakly in }L^{2}(0,T;H_{0,\sigma}^1),\textrm{ as }n\to\infty.
\end{equation}
Finally, by using~\eqref{eq:newest1} we have 
\begin{equation*}
\begin{aligned}
  \int_0^T\|\un-\vn\|_{2}^{2}\,dt&=\frac{T}{3M_n}\sum_{m=1}^{M_n}\|\um-\umu\|_{2}^{2}
\leq \frac{T}{3M_n} (\|u_0\|_{2}^2+\be_n^2\|\nabla u_0\|_{2}^{2}),
\end{aligned}
\end{equation*}
where we used Lemma~\ref{lem:discene}. We have then that 
\begin{equation}
  \label{eq:c5}
  \un-\vn\to 0\textrm{ strongly in }L^{2}(0,T;L_{0,\sigma}^2),\textrm{ as }n\to\infty.
\end{equation}
Hence, it follows that $u=v$ and also that 
\begin{equation}
  \label{eq:c6}
\begin{aligned}
  &\un\to u\textrm{ strongly in }L^{2}(0,T;L_{0,\sigma}^2),\textrm{ as }n\to\infty,
  \\
  &\vn\to u\textrm{ strongly in }L^{2}(0,T;L_{0,\sigma}^2),\textrm{ as }n\to\infty.
\end{aligned}
\end{equation}
Let $\varphi_n$ satisfying~\eqref{eq:c}, by multiplying~\eqref{eq:c1} by $\varphi_n$ and
by integrating by parts with respect to space and time we get
\begin{equation*}
    \int_0^T(\vn,\partial_t\varphi_n)-\be_n^2(\nabla\vn, \partial_t\nabla\varphi_n)
    +(\un\otimes\un,\nabla\varphi_n)-(\nabla\un,\nabla\varphi_n)=(P_nu_0,\varphi_0(0)).  
\end{equation*}
By using~\eqref{eq:e5}, we have  then 
\begin{equation*}
  \be_n^2\int_{0}^{T}\|\nabla\vn\|^{2}\,dt\leq c. 
\end{equation*}
This implies, in particular, that
\begin{equation*}
  \be_n^2\int_{0}^{T}(\Delta\partial_t\vn,\varphi_n)\,dt\to 0,\textrm{ as }n\to\infty. 
\end{equation*}
Then, by using~\eqref{eq:c},~\eqref{eq:c4}, and~\eqref{eq:c6} it is now straightforward to
prove the convergence to a Leray-Hopf weak solution. We omit further details.
\medskip

\noindent {\em Step 2: Convergence to a Suitable Weak Solution}\par

We prove now the most original part of this work, namely that the limit of the approximate
solutions satisfy the local energy inequality. By using~\eqref{eq:e4} we can infer there
exists $p\in L^{\frac{5}{3}}((0,T)\times\tore)$ such that (again up to a subsequence)
\begin{equation}
  \label{eq:c8}
  \pn\weakto p\textrm{ weakly in }L^{\frac{5}{3}}((0,T)\times\tore),\textrm{ as
  }n\to\infty.  
\end{equation}
In order to prove that $(u,p)$ is a suitable weak solution we only need to prove that
$(u,p)$ satisfies the local energy inequality~\eqref{eq:lei}. To this end we consider the
equations~\eqref{eq:a8} that we rewrite for the reader's convenience
\begin{equation}
  \label{eq:c9}
  \partial_t\vn-\alpha^2\partial_t\Delta\vn-\Delta\un+(\un\cdot\nabla)\,\un-
  Q_{n}((\un\cdot\nabla)\,\un)+\nabla\pn=0. 
\end{equation}
By testing~\eqref{eq:c9} by $\un\phi$ with $\phi\in
C_c^{\infty}((0,T);C^{\infty}(\tore))$, $\phi\geq 0$, and after several integration by
parts we get 
\begin{equation}
  \label{eq:c10}
  \begin{aligned}
    \int_{0}^T\int_{\tore}|\nabla\un|^2\phi\,dxdt=&-\int_0^T(\partial_t\vn,\un\phi)\,dt
    +\be_n^2\int_0^T(\partial_t\Delta\vn,\un\phi)\,dt
    \\
    &+\int_0^T\left(\frac{|\un|^2}{2},\Delta\phi\right)\,dt
    +\int_0^T\int_{\tore}\left(\frac{|\un|^2}{2}+p\right)\un\cdot\nabla\phi\,dx
    dt
    \\
    &+\int_0^T(Q_n(\un\cdot\nabla)\,\un),\un\phi)\,dt=:\sum_{i=1}^5I_{i}^{n}.
  \end{aligned}
\end{equation}
We treat all the terms on the right-hand side of~\eqref{eq:c10} separately. We start by
$I_1^n$.  
\begin{equation*} 
\begin{aligned}
  I_{1}^{n}&=-\int_0^T(\partial_t\vn,\un\phi)\,dt
=-\int_{0}^T(\partial_t\vn,\vn\phi)+\int_{0}^{T}(\partial_t\vn,(\vn-\un)\phi)\,dt
  \\
  &=\int_0^T\left(\frac{|\vn|^2}{2},\partial_t\phi\right)+
  \sum_{m=1}^{M_{n}}\int_{t_{m-1}}^{t_m}(\partial_t\vn, (\vn-\un)\phi)\,dt
\end{aligned}
\end{equation*}
By using that $\un$ is constant over $[t_{m-1},t_m)$, we infer that 
\begin{equation*}
  \begin{aligned}
\sum_{m=1}^{M_n}\int_{t_{m-1}}^{t_m}(\partial_t\vn, (\vn-\un)\phi)\,dt
    &=\sum_{m=1}^{M_n}\int_{t_{m-1}}^{t_m}(\partial_t(\vn-\un), (\vn-\un)\phi)\,dt
    \\
    &=-\sum_{m=1}^{M_n}\int_{t_{m-1}}^{t_m}\left(\frac{|\vn-\un|^2}{2},\partial_t\phi\right)\,dt,
  \end{aligned}
\end{equation*}
and we point out that there are no boundary terms arising in integration by parts due to
the fact that $\vn(t_m)=\un(t_m)$ for any $m=1,...,M_n$ and $\phi$ is compactly supported
in time.  Then,
\begin{equation*}
  I^{n}_{1}=\int_{0}^{T}\left(\frac{|\vn|^2}{2}-\frac{|\vn-\un|^2}{2},\partial_t\phi\right)\,dt,
\end{equation*}
and by using~\eqref{eq:c6} and~\eqref{eq:c5} it follows 
\begin{equation}
  \label{eq:c11}
  I^{n}_1\to\int_{0}^{T}\left(\frac{|u|^{2}}{2},\partial_t\phi\right)\,dt,\textrm{ as
  }n\to\infty.
\end{equation}
Let us consider now the term $I^{n}_{2}$. We have
\begin{equation*}
\begin{aligned}
  I_{2}^{n}=\be_2^2\int_0^T(\partial_t\Delta\vn,\un\phi)\,dt
  &=\be_n^2\int_{0}^T(\partial_t\Delta\vn,(\un-\vn)\phi)\,dt+
  \be_n^2\int_{0}^{T}(\partial_t\Delta\vn,\vn\phi)\,dt  
  \\
  &=:I^{n}_{2,1}+I^{n}_{2,2}. 
\end{aligned}
\end{equation*}
We estimate the term $I^{n}_{2,1}$ in a way similar to the term $I^{n}_{1,2}$. By using
that $\un$ is constant  over the interval $[t_{m-1},t_{m})$ we get
\begin{equation*}
\begin{aligned}
  I^{n}_{2,1}&=-\be_n^2\sum_{m=1}^{M_n}\int_{t_{m-1}}^{t_m}(\partial_t\nabla\vn,\nabla
  (\vn-\un)\phi)\,dt
  \\
  &=\be_n^2\sum_{m=1}^{M_n}\int_{t_{m-1}}^{t_m}(\partial_t\nabla(\vn-\un),
  \nabla(\vn-\un)\phi)\,dt
  \\
  &=-\be_n^2\sum_{m=1}^{M_n}\int_{t_{m-1}}^{t_m}
  \left(\frac{|\nabla(\vn-\un)|^2}{2},\partial_t\phi\right)\,dt 
  \\
  &=-\be_n^2\int_0^T\left(\frac{|\nabla(\vn-\un)|^2}{2},\partial_t\phi\right)\,dt,
\end{aligned}
\end{equation*}
where we used that $\nabla\vn(t_m)=\nabla\un(t_{m})$ for any $m=1,...,M_n$ and again that
$\phi$ is compactly supported in time. By using~\eqref{eq:newest2} we have (for a constant
$c$ depending only on $\phi$) 
\begin{equation*}
\begin{aligned}
  |I^{n}_{2,1}|&\leq c\,\be_n^2\int_0^T\|\nabla\vn-\nabla\un\|_{2}^2
  \\
  &=\frac{c\,T}{3M_n}\be_n^2\sum_{m=1}^{M_n}\|\nabla\um-\nabla\umu\|_{2}^{2}
  \\
  &\leq\frac{c\,T}{3M_n}(\|u_0\|_{2}^{2}+\be_n^2\|\nabla u_0\|_{2}^{2})\to 0,\textrm{ as
  }n\to\infty. 
\end{aligned}
\end{equation*}           
Now we consider the term $I_{2,2}^n$. By standard manipulations involving integrations by
parts we get that
\begin{equation*}
  \begin{aligned}
    I^{n}_{2,2}&= \be_n^2\int_0^T\int_{\tore}\Delta \partial_t\vn \vn \phi\,dx dt
    \\
    &=\be_n^2\int_0^T\int_{\tore}\left[\frac{|\nabla \vn|^2}{2}\partial_t\phi+ \nabla \vn
    \nabla\phi\,\partial_t\vn-\frac{|\vn|^2}{2} \Delta\,\partial_t\phi\right]\,dx dt
    \\
    & \leq\frac{\be_n^2}{2}\int_0^T\int_{\tore}|\nabla\vn|^2|\partial_t\,\phi|\,dx
    dt+\frac{\be_n^2}{2}\int_0^T\int_{\tore}|\vn|^2|\Delta\,\partial_t\phi|\,dx dt
    \\
    &\quad+\be_n^2\int_0^T\int_{\tore}|\partial_t\vn|\,|\nabla\vn|\,|\nabla\phi|\,dx dt
    \\
    & \leq c\be_n^2+c\be_n^2\int_0^T\|\partial_t\vn\|_2\|\nabla\vn\|_2\,dt
  \end{aligned}
\end{equation*}
where we used~\eqref{eq:e1}, H\"older inequality, and the fact that $\phi\in
C_c^\infty((0,T)\times\tore)$. Then,
 \begin{equation*}
 \begin{aligned}
   |I^{n}_{2,2}|&\leq c\be_n^2+c\be_n^2\int_0^T\|\partial_t\vn\|_2\|\nabla\vn\|_2\,dt
   \\
   &\leq
   c\be_n^2+c\be_n^{\frac{1}{2}}\left(\int_0^T\be_n^3\|\partial_t\vn\|_2^2\right)^{\frac{1}{2}}
   \left(\int_0^T\|\nabla\vn\|_2^2\right)^{\frac{1}{2}}
   \\
   &\leq c\,(\be_n^2+\be_n^{\frac{1}{2}})\to 0,\textrm{ as }n\to\infty.
 \end{aligned}
\end{equation*}
where we used H\"older inequality in time and~\eqref{eq:tdwc}. In particular, we have just
proved that
\begin{equation}
  \label{eq:c13}
  |I^{n}_{2}|\leq |I^{n}_{2,1}|+|I^{n}_{2,2}|\to 0,\textrm{ as }n\to\infty.
\end{equation}
Concerning the term $I^{n}_{3}$ and $I^{n}_{4}$ we recall that from~\eqref{eq:c4}
and~\eqref{eq:c6}
\begin{equation}
  \label{eq:c14}
  \un\to u\textrm{ strongly in }L^{3}(0,T;L^{3}(\tore)),\textrm{ as }n\to\infty. 
\end{equation}
Then,~\eqref{eq:c14} and~\eqref{eq:c8} are enough to prove that 
\begin{align}
  I^{n}_{3}&=\int_{0}^{T}\left(\frac{|\un|^2}{2},\Delta\phi\right)\,dt\ \to\
  \int_{0}^{T}\left(\frac{|u|^2}{2},\Delta\phi\right)\,dt ,\textrm{ as
  }n\to\infty,\label{eq:c15}
  \\
  I^{n}_{4}&=\int_{0}^{T}\left(\left(\frac{|\un|^2}{2}+\pn\right)\un,\nabla\phi\right)\,dt\
  \to\ \int_{0}^{T}\left(\left(\frac{|u|^2}{2}+p\right)u,\nabla\phi\right)\,dt, \textrm{
    as }n\to\infty.\label{eq:c16}
\end{align}
We are left with the term $I^{n}_{5}$. We have
\begin{equation*}
\begin{aligned}
  I^{n}_{5}&=\int_{0}^{T}(Q_n((\un\cdot\nabla)\,\un,\un\phi)\,dt
  =\int_{0}^{T}((\un\cdot\nabla)\,\un,Q_n(\un\phi))\,dt
  \\
  &\leq \int_0^T\|\un(t)\|_2\|\nabla\un(t)\|_2\|Q_n(\un(t)\phi(t))\|_{\infty}\,dt
  \\
  &\leq c\left(\int_0^T\|Q_n(\un(t)\phi(t))\|_{\infty}^2\,dt\right)^{\frac{1}{2}},
\end{aligned}
\end{equation*}
where in the last line we used H\"older inequality and~\eqref{eq:e3}. Then,
from~\eqref{eq:a4} and~\eqref{eq:vm} we have that $\un$ has the following representation
in Fourier series expansion
\begin{equation*}
\un(t,x)=\sum_{0<|k|\leq
    n}\sum_{m=1}^{M_n}\chi_{[t_{m-1},t_m)}(t)\hat{u}_{n,k}^{\,\be_n,m}\,\text{e}^{ik\cdot x}. 
\end{equation*}
By defining 
\begin{equation*}
\widehat{U}^{n}_{k}(t):=\sum_{m=1}^{M_n}\chi_{[t_{m-1},t_m)}(t)\hat{u}_{n,k}^{\,\be_n,m},
\end{equation*}
we have that 
\begin{equation*}
u^n(t,x)=\sum_{0<|k|\leq
  n}\widehat{U}_k^{n}(t)\,\text{e}^{ik\cdot x}.
\end{equation*}
Then, by using Lemma~\ref{lem:2.4} we have that
\begin{equation*}
\begin{aligned}
\int_0^T\|Q_n(\un(t)\phi(t))\|_{\infty}^2\,dt&\leq \frac{c}{n}
      \sum_{k\in\mathbb{Z}^3}|{\widehat{U}}_{n}^{k}(t)|^2\,dt+c\int_0^T
      n^2\sum_{|k|\geq\frac{n}{2}}|{\widehat{U}}_{n}^{k}(t)|^2
      =:I^{n}_{5,1}+I^{n}_{5,2}.
\end{aligned}
\end{equation*}
Regarding the term $I^{n}_{5,1}$ it follows by~\eqref{eq:e3} that 
\begin{equation*}
|I^{n}_{5,1}|\leq \frac{c}{n}\to 0,\textrm{ as }n\to\infty. 
\end{equation*}
For the term $I^{n}_{5,2}$ we have 
\begin{equation*}
\begin{aligned}
  \int_0^T n^2\sum_{|k|\geq\frac{n}{2}}|\widehat{U}^{n}_{k}(t)|^2&=
  \frac{n^2\be_n^6}{n^2\be_n^6}\int_0^T\sum_{|k|\geq\frac{n}{2}}
  n^2|\widehat{U}^{n}_{k}(t)|^2\,dt
  \\
  &\leq 4
  \frac{\be_n^6}{n^2\be_n^6}\int_0^T\sum_{|k|\geq\frac{n}{2}}|k|^4|\widehat{U}^{n}_{k}(t)|^2\,dt
  \\
  &\leq \frac{4}{n^2\be_n^6}\
  \be_n^6\int_0^T\sum_{k\in\Z^3\backslash\{0\}}|k|^4|\widehat{U}^{n}_{k}(t)|^2\,dt
  \\
  &\leq \frac{c}{n^2\be_n^6} \ \be_n^6\int_0^T\|\Delta
  \un\|^2_2\,dt\leq \frac{c}{n^2\be_n^6} .
\end{aligned}
\end{equation*}
where in the last inequality we have used~\eqref{eq:2dwc}. 
Then, by~\eqref{eq:hypo} we get that $|I^{n}_{5,2}|\to 0$ as $n\to\infty$ and then
\begin{equation}
  \label{eq:c17}
  |I^{n}_{5}|\to 0,\textrm{ as }n\to\infty.
\end{equation}
Finally, by using~\eqref{eq:c4} we have that 
\begin{equation}
  \label{eq:c18}
  \int_{0}^{T}\int|\nabla u|^2\phi\,dxdt\leq\liminf_{n\to\infty}\int_{0}^{T}\int|\nabla
  \un|^2\phi\,dxdt. 
\end{equation}
By
inserting~\eqref{eq:c18},~\eqref{eq:c11},~\eqref{eq:c13},~\eqref{eq:c15},~\eqref{eq:c16},
and~\eqref{eq:c17} in~\eqref{eq:c10} we have finally proved the local energy
inequality~\eqref{eq:lei}.

\section*{Acknowledgement}
S. Spirito acknowledges the support by INdAM-GNAMPA.

\end{document}